\documentclass[11pt,reqno]{amsart}
\usepackage{amssymb}
\usepackage{amssymb,amsmath,amsthm,amsfonts,mathrsfs,hyperref}
\usepackage{times}
\usepackage{enumerate}
\usepackage{cite,titletoc}
\usepackage{indentfirst}
\allowdisplaybreaks

\hypersetup{colorlinks=true,linkcolor=blue,citecolor=red,urlcolor=cyan}

\theoremstyle{plain}
\newtheorem{theorem}{Theorem}[section]

\newtheorem{lemma}[theorem]{Lemma}

\theoremstyle{definition}

\numberwithin{equation}{section}

\title[Existence of positive solutions to anisotropic elliptic equations]{Existence of positive solutions to anisotropic elliptic equations in the entire Euclidean space }

 \author[Rulong Xie, Hailian Wang and Lei Yang]{}

\subjclass{Primary: 35J60, 35A15; Secondary: 35B33}
\keywords{$N$-anisotropic Laplacian, anisotropic singular Moser-Trudinger inequality, exponential growth, mountain-pass theorem, Ekeland's variational principle.}

\thanks{*Corresponding author: Hailian Wang (hailianmath@163.com), Rulong Xie (xierl@ustc.edu.cn), Lei Yang (yanglei21@ahnu.edu.cn)}

\begin{document}
\maketitle

\centerline{\scshape
Rulong Xie$^{1,2}$, Hailian Wang$^{1,*}$
and Lei Yang$^{1,2}$}

\medskip

{\footnotesize
\centerline{$^1$School of Mathematics and Big Data, Chaohu University, Hefei, 238024, China}
}
\medskip

{\footnotesize
\centerline{$^2$School of Mathematics and Statistics, Anhui Normal University, Wuhu, 241002, China}
}

\bigskip
	
	
	\begin{abstract}
     In this paper, we research an anisotropic elliptic equation in the entire Euclidean space $\mathbb{R}^{N}$. Under the assumption that the nonlinearity of the equation has exponential or critical growth, we prove, by means of the mountain-pass theorem, that the equation admits a nontrivial positive weak solution. Moreover, applying the mountain-pass theorem and Ekeland's variational principle, we show that a perturbation of the equation possesses two distinct positive weak solutions.

     \vskip 0.2 true cm

	\end{abstract}

    \section{Introduction and main results}
    Let $\Omega\subset\mathbb{R}^{N}$ be a bounded smooth domain and consider the following problem:
    \begin{equation}\label{(1.1)}
        \left\{
        \begin{array}{ll}
            -\Delta_{p}u=f(x,u) \quad \text{in}\ \Omega,\vspace{1ex}\\
            u\in W_{0}^{1,p}(\Omega),
        \end{array}
        \right.
    \end{equation}
    where $\Delta_{p}u=\mathrm{div}(|\nabla u|^{p-2}\nabla u)$. In the case $p=2$ and $|f(x,u)|\leq c(|u|+|u|^{q-1})$ with $1<q\leq 2^{*}=\frac{2N}{N-2}$, $N\geq 3$, the existence theory for \eqref{(1.1)} was considered by many researchers such as Bartsch and Willem \cite{1}, Br\'{e}zis \cite{2}, Br\'{e}zis and Nirenberg \cite{3}, and Capozzi, Fortunato and Palmieri \cite{4}. When $p\leq N$ and $p^{2}\leq N$, the Br\'{e}zis-Nirenberg type existence and nonexistence results were carried over to the $p$-Laplacian by Garcia and Alonso \cite{5}. When the domain is replaced by the whole space $\mathbb{R}^{N}$ and $p=2$, the existence of the following semilinear Schr\"{o}dinger equation:
    \begin{equation}\label{(1.2)}
        \left\{
        \begin{array}{ll}
            -\Delta u+V(x)u=f(x,u) \quad \text{in}\ \mathbb{R}^{N},\vspace{1ex}\\
            u\in W^{1,2}(\mathbb{R}^{N}),
        \end{array}
        \right.
    \end{equation}
    where $|f(x,u)|\leq c(|u|+|u|^{q-1})$ with $1<q\leq 2^{*}=\frac{2N}{N-2}$ was researched by Alama and Li \cite{6}, Ding and Ni \cite{7}, Jeanjean \cite{8}, and Kryszewski and Szulkin \cite{9}. When $p=N$ and $f(x,u)$ behaves like $e^{\alpha |u|^{\frac{N}{N-1}}}$ as $|u|\rightarrow +\infty$, the Sobolev critical exponent is no longer available, and the sharp exponential integrability is provided by the Moser-Trudinger inequality \cite{16,17,18}; in this critical growth range, problem \eqref{(1.2)} was studied by Adimurthi \cite{10}, Adimurthi and Yadava \cite{11}, Ruf et al. \cite{12,13}, J. M. do \'{O} \cite{14}, and Panda \cite{15}.

    We next turn to the singular problem in the whole space:
    \begin{equation}\label{(1.3)}
        -\Delta_{N}u+V(x)|u|^{N-2}u=\frac{f(x,u)}{|x|^{\delta}},\quad x\in\mathbb{R}^{N},
    \end{equation}
    where $\Delta_{N}u=\mathrm{div}(|\nabla u|^{N-2}\nabla u)$, $V:\mathbb{R}^{N}\mapsto\mathbb{R}$ is a continuous function, $f:\mathbb{R}^{N}\times\mathbb{R}\mapsto\mathbb{R}$ is continuous and behaves like $e^{\alpha t^{\frac{N}{N-1}}}$ as $t\rightarrow +\infty$, and $0\leq\delta<N$.
    Equations of this type are used in the precise description of electric, gravitational and fluid potentials, and they appear in a variety of scientific domains such as fluid dynamics, electromagnetism and astronomy. For nonlinearities with subcritical or critical growth, \eqref{(1.3)} can be treated within the framework of general critical point theory. In the nonsingular case $\delta=0$, D. Cao \cite{19} solved \eqref{(1.3)} for $N=2$, and the result was extended to arbitrary dimensions by Panda \cite{20}, J. M. do \'{O} \cite{21} and Alves-Figueiredo \cite{22}; in the singular case $0<\delta<N$, Yang \cite{23} proved the existence of a positive solution to \eqref{(1.3)}.

    The mathematician also concerned the following perturbed version of \eqref{(1.3)}:
    \begin{equation}\label{(1.4)}
         -\Delta_{N}u+V(x)|u|^{N-2}u=\frac{f(x,u)}{|x|^{\delta}}+\epsilon h,\quad x\in\mathbb{R}^{N},
    \end{equation}
    where $\epsilon>0$ is a constant and $h:\mathbb{R}^{N}\mapsto\mathbb{R}$ belongs to the dual space of $W^{1,N}(\mathbb{R}^{N})$.
    Compared with \eqref{(1.3)}, the perturbation term $\epsilon h$ in \eqref{(1.4)} breaks the homogeneous structure of the energy functional, as a consequence, the problem may admit more than one solution. When $\delta=0$, J. M. do \'{O}, Medeiros and Severo \cite{24} proved the existence of solutions to \eqref{(1.4)} by combining Ekeland's variational principle, the mountain-pass theorem and a suitable Moser-Trudinger type inequality. When $0<\delta<N$, Adimurthi and Yang \cite{25} showed that \eqref{(1.4)} possesses two weak solutions, and Yang \cite{23} proved that these two solutions are distinct.

     When the medium under consideration is anisotropic, the Laplace operator is naturally replaced by the Finsler Laplacian, also called the anisotropic Laplacian, which is defined by
    $$Qu:=\sum_{i=1}^{N}\frac{\partial}{\partial x_{i}} (F(\nabla u)F_{\xi_{i}}(\nabla u)),$$
    where $F\in\mathcal{C}^{2}(\mathbb{R}^{N}\backslash\{0\})$ is a convex and positively homogeneous function of degree one and $F_{\xi_{i}}=\frac{\partial F}{\partial \xi_{i}}$.
    This operator is intimately connected with the Wulff shape of $F$, a notion that goes back to Wulff's classical work \cite{26} on the equilibrium geometry of crystals and the minimization of anisotropic surface tensions at the beginning of the last century. The anisotropic Sobolev framework associated with $Q$ has been developed extensively in the recent literature; we refer the reader to \cite{27,28,29,30,31,32,33,34} and the references therein for more information.

    In this paper, we study the anisotropic ellipitic equation as follows:
    \begin{equation}\label{(1.5)}
        -Q_{N}u+V(x)|u|^{N-2}u=\frac{f(x,u)}{F^{0}(x)^{\delta}}, \quad x\in\mathbb{R}^{N}(N\geq2),
    \end{equation}
     where $Q_{N}$, known as the $N$-Finsler Laplacian or the $N$-anisotropic Laplacian, is defined by $$Q_{N}u:=\sum_{i=1}^{N}\frac{\partial}{\partial x_{i}} (F^{N-1}(\nabla u)F_{\xi_{i}}(\nabla u)),$$
    $V:\mathbb{R}^{N}\mapsto\mathbb{R}$ is a continuous function, $f:\mathbb{R}^{N}\times\mathbb{R}\mapsto\mathbb{R}$ is continuous and behaves like $e^{\lambda t^{\frac{N}{N-1}}}$ as $t\to +\infty$, and $0\leq \delta <N$.
    Note that, when $0<\delta<N$, equation \eqref{(1.5)} is singular with respect to the weight $F^{0}(x)^{\delta}$; when $\delta=0$ it reduces to the nonsingular equation
    \begin{equation*}
        -Q_{N}u+V(x)|u|^{N-2}u=f(x,u), \quad x\in\mathbb{R}^{N}(N\geq2).
    \end{equation*}
    Both situations $\delta=0$ and $0<\delta<N$ are considered in this paper. Throughout this paper, a weak solution $u$ is called positive if $u(x)\geq 0$ a.e. in $\mathbb{R}^{N}$.

    We now collect the standing assumptions on $F$ and $F^{0}$ which were used throughout the paper. Let $F:\mathbb{R}^{N}\mapsto\mathbb{R}$ be a nonnegative convex function of class $\mathcal{C}^{2}(\mathbb{R}^{N}\backslash\{0\})$, which is even and positively homogeneous of degree one, i.e. $F(k\xi)=|k|F(\xi)$ for all $\xi\in\mathbb{R}^{N}$ and $k\in\mathbb{R}$, and suppose that $F(\xi)>0$ for every $\xi\neq 0$. By the homogeneity of $F$, there exist two positive constants $0<a\leq b<\infty$ such that $a|\xi|\leq F(\xi)\leq b|\xi|$ for every $\xi\in\mathbb{R}^{N}$.

    We define the map $\Psi: \mathbb{S}^{N-1}\mapsto \mathbb{R}^{N}$ by $\Psi(\xi)=F_{\xi}(\xi)$.
   Its image $\Psi(\mathbb{S}^{N-1})$ is a smooth convex hypersurface in $\mathbb{R}^{N}$, called the Wulff shape of $F$.
   Let $F^{0}(x):=\sup\limits_{\xi\in K}\langle x,\xi \rangle$ be the support function of $K:=\{x\in\mathbb{R}^{N}: F(x)\leq 1\}$.
   One can check that $F^{0}: \mathbb{R}^{N}\mapsto [0,+\infty)$ is a convex, homogeneous function of class $\mathcal{C}^{2}(\mathbb{R}^{N}\backslash\{0\})$ as well.
   In fact, $F^{0}$ and $F$ are dual to each other in the sense that $$F^{0}(x)= \sup\limits_{\xi\in\mathbb{R}^{N}\backslash\{0\}}\frac{\langle x,\xi \rangle}{F(\xi)},\quad F(x)= \sup\limits_{\xi\in\mathbb{R}^{N}\backslash\{0\}}\frac{\langle x,\xi \rangle}{F^{0}(\xi)}. $$
   It is readily seen that $\Psi(\mathbb{S}^{N-1})=\{x\in\mathbb{R}^{N}: F^{0}(x)=1\}$. We write $\mathcal{W}_{r}(x_{0}):=\{x\in\mathbb{R}^{N}: F^{0}(x-x_{0})\leq r\}$ for the Wulff ball of radius $r$ centered at $x_{0}$, and abbreviate $\mathcal{W}_{r}(0)$ as $\mathcal{W}_{r}$.
   Let $\kappa_{N}=|\mathcal{W}_{1}|=|\{x\in\mathbb{R}^{N}: F^{0}(x)\leq 1\}|$, where $|\cdot|$ denotes the Lebesgue measure; thus $\kappa_{N}$ is the volume of the unit Wulff ball in $\mathbb{R}^{N}$.

   We also list some properties of $F$ and $F^{0}$; for further information, see \cite{35,36,37}.
       \begin{enumerate}
           \item [\rm(a)] $|F(x)-F(y)|\leq F(x+y)\leq F(x)+F(y);$
           \item [\rm(b)] $\frac{1}{C}\leq |\nabla F(x)|\leq C, \ \frac{1}{C}\leq |\nabla F^{0}(x)|\leq C$ for some $C>0$ and $x\neq 0;$
           \item [\rm(c)] $\langle x,\nabla F(x)\rangle=F(x), \ \langle x,\nabla F^{0}(x)\rangle=F^{0}(x)$ for $x\neq 0;$
           \item [\rm(d)] $F(\nabla F^{0}(x))=1, \ F^{0}(\nabla F(x))=1$ for $x\neq 0;$
           \item [\rm(e)] $F^{0}(x)F_{\xi}(\nabla F^{0}(x))=x, \ F(\xi)F_{x}^{0}(\nabla F(\xi))=\xi$ for any $x\neq 0, \ \xi\neq 0;$
           \item [\rm(f)] $F_{\xi}(t\xi)=sgn(t)F_{\xi}(\xi)$ for any $\xi\neq 0$ and $t\neq 0.$
       \end{enumerate}

    We assume that the potential $V$ satisfies the following conditions:

   \noindent $(A_{1})$ \quad There exists a positive constant $V^{*}$ such that $V(x)\geq V^{*}>0$ for every $x\in\mathbb{R}^{N}$;

    \noindent $(A_{2})$ \quad The function $\frac{1}{V(x)}$ belongs to $L^{\frac{1}{N-1}}(\mathbb{R}^{N})$.

     We define the function space
    $$S=\left\{ u\in W^{1,N}(\mathbb{R}^{N}):\int_{\mathbb{R}^{N}}V(x)|u| ^{N}dx<\infty \right\} ,$$
    endowed with the norm
    $$\|u\|_{S}=\left ( \int_{\mathbb{R}^{N}}(F^{N}(\nabla u)+V(x)|u|^{N})dx\right )^{\frac{1}{N}}. $$
    In view of $(A_{1})$, $S$ is a reflexive Banach space. Combining $(A_{1})$ with the Sobolev embedding theorem, we see that the embedding $S\hookrightarrow W^{1,N}(\mathbb{R}^{N})\hookrightarrow L^{q}(\mathbb{R}^{N})$ is continuous for every $q\geq N$.

    Throughout the paper, the nonlinearity $f(x,t)$ is assumed to satisfy the following conditions:

    \noindent $(B_{1})$ \quad There exist constants $\lambda_{0}, a_{1}, a_{2}>0$ such that, for every $(x,t) \in \mathbb{R}^{N} \times \mathbb{R}^{+}$,
    $$|f(x,t)|\leq a_{1}|t|^{N-1}+a_{2}\Phi\left(N,\lambda_{0}|t|^{\frac{N}{N-1}}\right),$$
    where $$\Phi(N,k)=e^{k}-\sum_{j=0}^{N-2}\frac{k^{j}}{j!};$$

    \noindent $(B_{2})$ \quad  There exists $\eta>N$ such that, for every $x\in \mathbb{R}^{N}$ and $t>0$,
    $$0<\eta G(x,t)\equiv \eta \int_{0}^{t}f(x,k)dk\leq tf(x,t) ;$$

    \noindent $(B_{3})$ \quad There exist constants $T_{0},R_{0}>0$ such that, for every $x\in \mathbb{R}^{N}$ and $t\geq T_{0}$,
    $$G(x,t)\leq R_{0}f(x,t);$$

    \noindent $(B_{4})$ \quad $\limsup\limits_{t\to 0^{+}}\frac{N|G(x,t)|}{t^{N}}<\lambda_{\delta}$ uniformly with respect to $x\in \mathbb{R}^{N}$, where $\lambda_{\delta}$ is the singular eigenvalue of the $N$-anisotropic Laplacian defined by $$\lambda_{\delta}=\inf_{u\in S,u\not\equiv 0}\frac{\|u\|_{S}^{N}}{\int_{\mathbb{R}^{N}}\frac{|u|^{N}}{F^{0}(x)^{\delta}}dx}.$$
    The continuous embedding $S\hookrightarrow L^{q}(\mathbb{R}^{N})$ for $q\geq N$ together with the H\"{o}lder inequality implies
    \begin{equation*}
        \int_{\mathbb{R}^{N}}\frac{|u|^{N}}{F^{0}(x)^{\delta}}dx\leq \int_{F^{0}(x)>1}|u|^{N}dx+\left(\int_{F^{0}(x)\leq1}|u|^{Np_{1}}dx\right)^{\frac{1}{p_{1}}}\left(\int_{F^{0}(x)\leq1}\frac{1}{F^{0}(x)^{\delta p_{2}}}\right)^{\frac{1}{p_{2}}}\leq C\|u\|_{S}^{N},
    \end{equation*}
    where $\frac{1}{p_{1}}+\frac{1}{p_{2}}=1$, $0<\delta p_{2}<N$, and $C$ is a positive constant. Hence $\lambda_{\delta}>0$;

    \noindent $(B_{5})$ \quad $\liminf\limits_{t\to +\infty}tf(x,t)e^{-\lambda_{0}t^\frac{N}{N-1}}=\beta_{0}>\mathcal{B}$ uniformly with respect to $x\in \mathbb{R}^{N}$, where
    \begin{equation*}
    \mathcal{B}=\inf\limits_{l>0}\frac{(N-\delta)^{N}}{\lambda_{0}^{N-1}l^{N-\delta}}e^{(N-\delta)g(l)\frac{(N-2)!}{N^{N}}l^{N}},
    \end{equation*}
    $g(l)=\sup\limits_{F^{0}(x)\leq l}V(x)$ and $\lambda_{0}$ is given by $(B_{1})$.
    Since $V$ is continuous and $(A_{1})$ holds, $g(l)$ is a positive continuous function of $l$, and the infimum $\mathcal{B}$ is attained at some $l>0$.

    A function $u\in S$ is called a weak solution of problem \eqref{(1.5)} if
    $$\int_{\mathbb{R}^{N}}(F^{N-1}(\nabla u)F_{\xi}(\nabla u)\nabla \phi+V(x)|u|^{N-2}u\phi)dx=\int_{\mathbb{R}^{N}}\frac{f(x,u)}{F^{0}(x)^{\delta}}\phi dx$$
    holds for every $\phi \in S$.

    Along with \eqref{(1.5)}, we consider its perturbation
    \begin{equation}\label{(1.6)}
        -Q_{N}u+V(x)|u|^{N-2}u=\frac{f(x,u)}{F^{0}(x)^{\delta}}+\epsilon h, \quad x\in\mathbb{R}^{N}(N\geq2),
    \end{equation}
    where $\epsilon>0$ is a constant and $h:\mathbb{R}^{N}\mapsto\mathbb{R}$ belongs to $S^{*}$, the dual space of $S$.
    One can refer to Xie and Yang \cite{44} about the related problems on the bounded domain.

    We are now in a position to state the main results of this paper.

    \begin{theorem}\label{Th1.1}
     Suppose that $(A_{1})$, $(A_{2})$ and $(B_{1})-(B_{5})$ hold. Then problem \eqref{(1.5)} admits a nontrivial positive weak solution of mountain-pass type.
    \end{theorem}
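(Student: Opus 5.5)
We will obtain the solution as a mountain-pass critical point of the functional
\[
J(u)=\frac1N\|u\|_S^N-\int_{\mathbb{R}^N}\frac{G(x,u^+)}{F^0(x)^\delta}\,dx ,\qquad u\in S,
\]
where $f(x,t)$ is replaced by $0$ for $t\le0$. The main tool is an anisotropic singular Moser--Trudinger inequality on $\mathbb{R}^N$ with Wulff-ball geometry. Its form is
\[
\sup_{\|u\|_S\le1}\int_{\mathbb{R}^N}\frac{\Phi\bigl(N,\lambda|u|^{\frac N{N-1}}\bigr)}{F^0(x)^\delta}\,dx<\infty
\quad\text{iff}\quad
\lambda\le\lambda_N\Bigl(1-\frac\delta N\Bigr),\qquad \lambda_N=N^{\frac N{N-1}}\kappa_N^{\frac1{N-1}} .
\]
Together with $(B_1)$, this inequality shows that $J\in C^1(S)$. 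It also shows that critical points of $J$ are weak solutions of \eqref{(1.5)}.

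Testing the equation with $u^-$ gives $\|u^-\|_S=0$, so every critical point is positive.

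\textbf{Mountain-pass geometry.} By $(B_4)$ and $(B_1)$, for $t>0$ we have
\[
G(x,t)\le \frac{\lambda_\delta-\varepsilon}{N}\,t^N+C\,t^{q}\,\Phi\bigl(N,\lambda_0 t^{\frac N{N-1}}\bigr),\qquad q>N.
\]
Hölder's inequality and the Moser--Trudinger inequality, applied for $\|u\|_S$ small, then give
\[
J(u)\ge \frac{\varepsilon}{N\lambda_\delta}\|u\|_S^N-C\|u\|_S^q .
\]
Hence $J\ge\alpha>0$ on a small sphere. By $(B_2)$, $G(x,t)\ge c\,t^\eta-C$ for $t$ large, so $J(tu)\to-\infty$ for any fixed nonnegative $u$ with compact support. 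The mountain-pass theorem without the (PS) condition therefore gives a Palais--Smale sequence $(u_k)$ at level $c_M>0$.

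\textbf{Boundedness and compactness.} Using $(B_2)$ in the standard way, $J(u_k)-\frac1\eta J'(u_k)u_k$ shows that $(u_k)$ is bounded in $S$. The same computation bounds $\int f(x,u_k)u_k/F^0(x)^\delta$ and $\int G(x,u_k)/F^0(x)^\delta$.

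Assumption $(A_2)$ gives the compact embedding $S\hookrightarrow L^q(\mathbb{R}^N)$ for $q\ge1$. The argument is as follows:
\[
\int|u|\,dx\le\Bigl(\int V^{-\frac1{N-1}}\Bigr)^{\frac{N-1}N}\Bigl(\int V|u|^N\Bigr)^{\frac1N},
\]
and tails are controlled by the integrability of $V^{-1/(N-1)}$ outside large Wulff balls.

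Pass to a subsequence with $u_k\rightharpoonup u$ weakly in $S$ and $u_k\to u$ almost everywhere. The bound $\int f(x,u_k)u_k/F^0(x)^\delta\le C$ combined with $(B_3)$ gives $G(x,u_k)/F^0(x)^\delta\to G(x,u)/F^0(x)^\delta$ in $L^1$; this is the de Figueiredo--Miyagaki--Ruf lemma. Likewise $f(x,u_k)/F^0(x)^\delta\to f(x,u)/F^0(x)^\delta$ in $L^1_{loc}$. The tail near infinity is handled with $(A_2)$, and the region near the origin with $\delta<N$.

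Next we show that $\nabla u_k\to\nabla u$ almost everywhere, by the usual truncation argument using the strict monotonicity of $\xi\mapsto F^{N-1}(\xi)F_\xi(\xi)$. It follows that $u$ is a weak solution.

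\textbf{Main obstacle: $u\neq0$.} This step is the hardest, and it requires the sharp level estimate
\[
c_M<\frac1N\Bigl(\frac{N-\delta}{N}\cdot\frac{\lambda_N}{\lambda_0}\Bigr)^{N-1}.
\]

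Suppose first that $u=0$. Then $\int G(x,u_k)/F^0(x)^\delta\to0$, so $\|u_k\|_S^N\to Nc_M$. The level bound gives
\[
\lambda_0\|u_k\|_S^{\frac N{N-1}}<\lambda_N\Bigl(1-\frac\delta N\Bigr).
\]
Hölder's inequality and the Moser--Trudinger inequality then show that $\int f(x,u_k)u_k/F^0(x)^\delta\to0$. Hence $\|u_k\|_S\to0$, which contradicts $c_M>0$.

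To prove the level estimate, we use anisotropic Moser functions adapted to Wulff balls. For $l$ realizing $\mathcal B$, define $w_n$ to be
\[
(\log n)^{\frac{N-1}N}\quad\text{on }\mathcal W_{l/n},\qquad
\frac{\log(l/F^0(x))}{(\log n)^{1/N}}\quad\text{on }\mathcal W_l\setminus\mathcal W_{l/n},\qquad
0\ \text{outside } \mathcal W_l,
\]
suitably normalized by $\kappa_N$. Property (d), $F(\nabla F^0)=1$, together with the coarea formula, makes
\[
\int F^N(\nabla w_n)\,dx=N\kappa_N,
\]
and the potential term is bounded by $g(l)\int_{\mathcal W_l}|w_n|^N=O(1/\log n)$ with the explicit constant $g(l)\frac{(N-2)!}{N^N}l^N$ after normalization.

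Now argue by contradiction. If $\max_t J(tw_n/\|w_n\|_S)$ were at least the critical level for every $n$, then $(B_5)$, applied on $\mathcal W_{l/n}$ where $tw_n$ is large, would give the bound
\[
\beta_0\le\frac{(N-\delta)^N}{\lambda_0^{N-1}l^{N-\delta}}\,e^{(N-\delta)g(l)\frac{(N-2)!}{N^N}l^N}+o(1)=\mathcal B+o(1)
\]
as $n\to\infty$. This contradicts $\beta_0>\mathcal B$. The delicate part is the careful asymptotic expansion of the maximizing $t_n$ and tracking the constants, including the $F^0(x)^{-\delta}$ weight on the Wulff ball, so that they match the definition of $\mathcal B$.
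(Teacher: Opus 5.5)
Your proposal is correct and follows essentially the same route as the paper: mountain-pass geometry from $(B_1)$, $(B_2)$, $(B_4)$, boundedness of the Palais--Smale sequence via $(B_2)$, compactness from $(A_2)$ together with the de Figueiredo--Miyagaki--Ruf argument and $(B_3)$, a.e.\ convergence of gradients (you use truncation where the paper uses a finite concentration set, an inessential difference), and nontriviality from the sharp level bound proved with Wulff-ball Moser functions and $(B_5)$. Your shortcut of applying $(B_5)$ only on $\mathcal W_{l/n}$ is legitimate and gives exactly the paper's final inequality, since the paper's extra annulus and $B_j$ terms cancel in the limit; one small constant slip to fix is that the potential contributes $\frac{g(l)}{\log n}\bigl(\frac{(N-1)!}{N^N}l^N+o(1)\bigr)$ to $\|\tilde v_n\|_S^N$, and the constant $\frac{(N-2)!}{N^N}$ appears only after raising $\|\tilde v_n\|_S^N$ to the power $\frac{1}{N-1}$ in $v_n^{N/(N-1)}$.
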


    \begin{theorem}\label{Th1.2}
      Suppose that $(A_{1})$, $(A_{2})$ and $(B_{1})-(B_{5})$ hold. Furthermore, assume that $h(x)\geq 0$ and $h(x)\not\equiv 0$. Then there exists $\epsilon^{*}>0$ such that, for every $0<\epsilon<\epsilon^{*}$, problem \eqref{(1.6)} has two distinct positive weak solutions.
    \end{theorem}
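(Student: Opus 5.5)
Our plan is to follow the scheme of Adimurthi--Yang and Yang, adapted to the anisotropic setting. We work with the functional
$$J_\epsilon(u)=\frac1N\|u\|_S^N-\int_{\mathbb{R}^N}\frac{G(x,u)}{F^0(x)^\delta}\,dx-\epsilon\int_{\mathbb{R}^N}hu\,dx,$$
where $f(x,t)$ is replaced by $0$ for $t\le 0$, so that critical points are nonnegative. The tool we need is the anisotropic singular Moser--Trudinger inequality on $S$: for $\lambda>0$ and $u\in S$, $\int \Phi(N,\lambda|u|^{N/(N-1)})F^0(x)^{-\delta}dx<\infty$, and the supremum over $\|u\|_S\le1$ is finite when $\lambda/\lambda_N+\delta/N\le1$ (strictly less for our purposes). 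Here $\lambda_N=N^{N/(N-1)}\kappa_N^{1/(N-1)}$. Together with $(A_2)$, which gives compactness of $S\hookrightarrow L^q$ for $q\ge1$, and $(B_1)$, this makes $J_\epsilon$ a $C^1$ functional on $S$.

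\textbf{Step 1 (geometry).} From $(B_4)$ and $(B_1)$, we have $G(x,t)\le\frac{\lambda_\delta-\tau}{N}|t|^N+C|t|^{q}\Phi(N,\lambda|t|^{N/(N-1)})$ for some $q>N$. Using Hölder's inequality and the Moser--Trudinger bound for small $\|u\|_S$, we obtain $J_\epsilon(u)\ge \frac{\tau}{N\lambda_\delta}r^N-Cr^q-\epsilon\|h\|_{S^*}r$ on $\|u\|_S=r$. Hence there are $r_\epsilon,\ \alpha>0$ with $J_\epsilon\ge\alpha$ on the sphere when $\epsilon<\epsilon_1$. Condition $(B_2)$ gives $G\ge ct^\eta$ for large $t$, so $J_\epsilon(t u_0)\to-\infty$ for fixed $u_0\ge0$.

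\textbf{Step 2 (Palais--Smale analysis).} Let $(u_n)$ be a (PS)$_c$ sequence. Combining $(B_2)$ with $J_\epsilon(u_n)-\frac1\eta\langle J'_\epsilon(u_n),u_n\rangle$ shows that $(u_n)$ is bounded in $S$ and that $\int f(x,u_n)u_nF^0(x)^{-\delta}$ is bounded. We pass to a weak limit $u$ with strong convergence in $L^q$. Then $(B_3)$ together with a generalized dominated convergence argument gives $\int G(x,u_n)F^0(x)^{-\delta}\to\int G(x,u)F^0(x)^{-\delta}$, and $f(x,u_n)F^0(x)^{-\delta}\to f(x,u)F^0(x)^{-\delta}$ in $L^1_{loc}$. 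Monotonicity of $\xi\mapsto F^{N-1}(\xi)F_\xi(\xi)$ then yields $\nabla u_n\to\nabla u$ a.e., so $u$ is a weak solution. We also need a compactness threshold: if $c<\frac1N\left(\frac{N-\delta}{N}\frac{\lambda_N}{\lambda_0}\right)^{N-1}$ (for $\epsilon=0$), then $u\neq0$. The argument is that otherwise $\|u_n\|_S^N\to Nc$, and a Lions-type improvement of Moser--Trudinger forces $\int f(x,u_n)u_nF^0(x)^{-\delta}\to0$, hence $c=0$.

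\textbf{Step 3 (level estimate; the main obstacle).} We must show that the mountain-pass level lies below this threshold, and this is where $(B_5)$ and $\mathcal B$ enter. We will use anisotropic Moser functions centered at $0$ in the Wulff ball $\mathcal W_l$: $M_n=\omega_{N}^{-1/N}(\log n)^{(N-1)/N}$ on $\mathcal W_{l/n}$, $M_n=\log(l/F^0(x))/(\log n)^{1/N}$ on the annulus, and $0$ outside, with $\omega_N=N\kappa_N$. We compute $\int F^N(\nabla M_n)=1$ via the coarea formula with $F(\nabla F^0)=1$. The potential term is bounded by $g(l)\int_{\mathcal W_l}M_n^N=O(g(l)\,l^N/\log n)$ with the explicit constant that appears in $\mathcal B$. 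We then argue by contradiction. Suppose $\max_t J_0(tM_n)\ge$ threshold for all $n$. At the maximizing $t_n$, use $(B_5)$ on $\mathcal W_{l/n}$ to get $t_n^N\ge\ldots\ge\beta\,\frac{\kappa_N l^{N-\delta}}{(N-\delta)}\,n^{-(N-\delta)}\exp(\lambda_0t_n^{N/(N-1)}M_n^{N/(N-1)})$. Letting $n\to\infty$ yields $\beta_0\le\mathcal B$ after optimizing in $l$, contradicting $(B_5)$. This bookkeeping of constants is the delicate part, especially tracking how the $V$-term perturbs $t_n$ to first order in $1/\log n$. Finally, by continuity in $\epsilon$, the level stays below the threshold for small $\epsilon$. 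The mountain-pass theorem (for $\epsilon=0$, Theorem~\ref{Th1.1}) then gives a nonzero critical point $u_M$ at a level $c_M\ge\alpha>0$. Testing the equation with $u^-$ shows $u_M\ge0$.

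\textbf{Step 4 (second solution).} Since $h\ge0$ and $h\not\equiv0$, we have $J_\epsilon(tv)<0$ for small $t>0$ with $\int hv>0$. Thus $c_0=\inf_{\|u\|_S\le r_\epsilon}J_\epsilon<0$. Ekeland's variational principle on this closed ball gives a (PS)$_{c_0}$ sequence; it lies in the interior because $J_\epsilon\ge\alpha>0$ on the boundary. The sequence is bounded with small norm, so the Moser--Trudinger exponent stays subcritical and Step 2 gives strong convergence to $u_0$ with $J_\epsilon(u_0)=c_0<0<\alpha\le c_M\le J_\epsilon(u_M)$. Hence $u_0\ne u_M$ for $\epsilon<\epsilon^*$, and both are positive by the truncation. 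This proves Theorem~\ref{Th1.2}.
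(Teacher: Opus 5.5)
\textbf{Gap: the two solutions are not shown to be distinct.} Your final chain in Step 4, $J_\epsilon(u_0)=c_0<0<\alpha\le c_M\le J_\epsilon(u_M)$, relies on $c_M\le J_\epsilon(u_M)$. You never prove this, and what you actually have points the other way. For the mountain-pass Palais--Smale sequence $(w_j)$, Step 2 gives only three things: $w_j\rightharpoonup u_M$, $\int G(x,w_j)F^0(x)^{-\delta}dx\to\int G(x,u_M)F^0(x)^{-\delta}dx$, and $\int hw_j\,dx\to\int hu_M\,dx$. Hence $J_\epsilon(u_M)=c_M-\frac1N\left(\lim_j\|w_j\|_S^N-\|u_M\|_S^N\right)\le c_M$, with strict inequality whenever the norms do not converge. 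Under critical exponential growth the Palais--Smale condition fails, so nothing you have proved rules out $w_j\rightharpoonup u_0$ with $\|w_j\|_S^N-\|u_0\|_S^N\to N(c_M-c_0)>0$.

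Two further points. For $\epsilon>0$ the nontriviality of $u_M$ is automatic, since $h\not\equiv0$. Your appeal to Theorem~\ref{Th1.1} also produces a critical point of $J_0$, not of $J_\epsilon$. So the level estimate of Step 3, which you only feed into the $\epsilon=0$ nontriviality criterion, does no work in your proof of Theorem~\ref{Th1.2}. Its real role is to supply exactly the compactness that is missing here.

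\textbf{How the paper closes it.} It argues by contradiction and assumes $u_M=u_0$. Since $y_j\to u_0$ strongly, $J_\epsilon(u_0)=c_0$, and the convergences above give $\|w_j\|_S^N\to\|u_0\|_S^N+N(c_M-c_0)$. The level estimate for $J_\epsilon$ with a uniform margin $\gamma^0$ (Lemma~\ref{Le3.5}), combined with $c_0\to0$ as $\epsilon\to0$, yields $0<N(c_M-c_0)<\left(\frac{N-\delta}{N}\frac{\lambda_N}{\lambda_0}\right)^{N-1}$ for small $\epsilon$.

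Set $m_j=w_j/\|w_j\|_S$. Then $m_j\rightharpoonup m_0=u_0/(\|u_0\|_S^N+N(c_M-c_0))^{1/N}\not\equiv0$ and $\lambda_0\|w_j\|_S^{N/(N-1)}(1-\|m_0\|_S^N)^{1/(N-1)}\to\lambda_0(N(c_M-c_0))^{1/(N-1)}<(1-\frac{\delta}{N})\lambda_N$. The Lions-type Lemma~\ref{Le2.3} with Lemma~\ref{Le2.4} then bounds $\Phi(N,\lambda_0|w_j|^{N/(N-1)})F^0(x)^{-\delta}$ in some $L^q$ with $q>1$. Consequently $\int f(x,w_j)(w_j-u_0)F^0(x)^{-\delta}dx\to0$. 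Testing with $w_j-u_0$ and using \eqref{(2.5)}--\eqref{(2.6)} gives $w_j\to u_0$ in $S$, hence $c_M=c_0$, contradicting $c_0<0<c_M$.

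Alternatively, you can run the same concentration-compactness argument directly. It uses the bound $J_\epsilon(u)\ge -C\epsilon$ for critical points, obtained from $J_\epsilon(u)-\frac1N\langle J_\epsilon'(u),u\rangle$ and $(B_2)$, to prove $w_j\to u_M$ strongly for small $\epsilon$; after that your equality $J_\epsilon(u_M)=c_M$ does hold. Some such step has to be added. Without it, your proposal produces two solutions that might coincide.
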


    We may assume throughout this paper that $f(x,t)\equiv 0$ for every $t<0$. Set
    \begin{equation*}
        f^{*}(x,t)=\left\{
        \begin{array}{ll}
            f(x,t), &f(x,t)\geq 0,\vspace{1ex}\\
            0, &f(x,t)<0.
        \end{array}
        \right.
    \end{equation*}
    If $u\in S$ is a weak solution of
    \begin{equation}\label{(1.7)}
        -Q_{N}u+V(x)|u|^{N-2}u=\frac{f^{*}(x,u)}{F^{0}(x)^{\delta}}+\epsilon h,
    \end{equation}
     where $h\geq 0$ and $\epsilon>0$, then the negative part of $u$,
    \begin{equation*}
        u^{-}(x)=\left\{
        \begin{array}{ll}
           u(x), &u(x)\leq 0, \vspace{1ex} \\
           0, &u(x)>0
        \end{array}
        \right.
    \end{equation*}
    belongs to $S$ and satisfies
    \begin{align*}
       \int_{\mathbb{R}^{N}}(F^{N}(\nabla u^{-})+V(x)|u^{-}|^{N})dx
       &=\int_{\mathbb{R}^{N}}\frac{f^{*}(x,u)}{F^{0}(x)^{\delta}}u^{-}dx+\epsilon\int_{\mathbb{R}^{N}}hu^{-}dx\\
&=\epsilon\int_{\mathbb{R}^{N}}hu^{-}dx\leq 0.
    \end{align*}
    Hence $u^{-}(x)=0$ a.e. in $\mathbb{R}^{N}$, so $u$ is a positive weak solution of \eqref{(1.7)}. In view of $(B_{2})$, we have $f(x,u)\geq 0$; consequently $f^{*}(x,u)=f(x,u)$, and $u$ is also a positive weak solution of \eqref{(1.6)}. When $h=0$, equation \eqref{(1.6)} reduces to \eqref{(1.5)}.
     Therefore, in order to prove Theorems \ref{Th1.1} and \ref{Th1.2}, it suffices to find weak solutions of \eqref{(1.5)} and \eqref{(1.6)} with $f$ replaced by $f^{*}$, respectively.

     This paper is organized as follows. In Section 2, we collect some preliminaries and inequalities that will be used later. In Section 3, we introduce the functionals associated with problems \eqref{(1.5)} and \eqref{(1.6)} and establish their main properties. In Section 4, we prove Theorem \ref{Th1.1} by the mountain-pass theorem, then prove Theorem \ref{Th1.2} by combining the mountain-pass theorem with Ekeland's variational principle and by showing that the two solutions obtained are distinct.

     Throughout this paper, the letter $C$ will denote various positive constants whose exact values may change from line to line. In what follows, we do not distinguish between sequences and subsequences.

     \section{Preliminaries}

     First, we recall the anisotropic singular Moser-Trudinger type inequality in a bounded domain, which was established by Zhu \cite{39} by means of a convex symmetrization technique \cite{27}, namely
     \begin{lemma}\label{Le2.1}
     Let $\Omega$ be a bounded domain in $\mathbb{R}^{N}$ which contains the origin, for every $\lambda >0, 0\leq\delta<N, u\in W^{1,N}_{0}(\Omega)(N\geq 2)$, there holds
       \begin{equation*}
    \int_{\Omega}\frac{e^{\lambda|u|^{\frac{N}{N-1}}}}{F^{0}(x)^{\delta}}dx<\infty.
            \end{equation*}
    Furthermore, we have for every $\lambda\leq(1-\frac{\delta}{N})\lambda_{N}$,
    \begin{equation*}
          \sup\limits_{\int_{\Omega}F^{N}(\nabla u)dx\leq 1}\int_{\Omega}\frac{e^{\lambda|u|^{\frac{N}{N-1}}}}{F^{0}(x)^{\delta}}dx<\infty,
       \end{equation*}
       where $\lambda_{N}=N^{\frac{N}{N-1}}\kappa_{N}^{\frac{1}{N-1}}$, and this inequality is sharp as the supremum is infinity for every $\lambda>\lambda_{N}$.
   \end{lemma}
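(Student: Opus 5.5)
The plan is to prove Lemma \ref{Le2.1} by reducing to a radially symmetric problem through convex symmetrization. The reduction will be with respect to the Wulff shape rather than Euclidean balls. For $u\in W^{1,N}_0(\Omega)$ we may replace $u$ by $|u|$, since $F(\nabla|u|)=F(\nabla u)$ a.e. We then let $u^\star$ be the convex rearrangement of $|u|$. This is the function $u^\star(x)=u^\sharp(\kappa_N F^0(x)^N)$, where $u^\sharp$ is the one-dimensional decreasing rearrangement, and $u^\star$ is supported in the Wulff ball $\mathcal{W}_R$ with $\kappa_N R^N=|\Omega|$.

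Three properties of $u^\star$ are needed.
\begin{enumerate}
\item[(i)] Equimeasurability: $|\{u^\star>t\}|=|\{|u|>t\}|$ for all $t$.
\item[(ii)] The anisotropic P\'olya--Szeg\H{o} inequality: $\int F^N(\nabla u^\star)\le\int_\Omega F^N(\nabla u)$. This follows from the anisotropic coarea formula together with the Wulff (anisotropic isoperimetric) inequality $P_F(E)\ge N\kappa_N^{1/N}|E|^{(N-1)/N}$, as in \cite{27}.
\item[(iii)] A Hardy--Littlewood type inequality: $\int_\Omega \frac{e^{\lambda|u|^{N/(N-1)}}}{F^0(x)^\delta}\,dx\le\int_{\mathcal{W}_R}\frac{e^{\lambda (u^\star)^{N/(N-1)}}}{F^0(x)^\delta}\,dx$. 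It holds because $F^0(x)^{-\delta}$ is itself symmetric decreasing with respect to Wulff balls and $e^{\lambda t^{N/(N-1)}}$ is increasing in $t$.
\end{enumerate}
Consequently it suffices to treat functions of the form $v(F^0(x))$ on $\mathcal{W}_R$.

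For such $v$ I would pass to polar coordinates adapted to $F^0$. The co-area formula with $|\nabla F^0|$ and property (d) give $dx = N\kappa_N r^{N-1}\,dr\,d\sigma$ after normalization, and $F(\nabla v(F^0))=|v'(r)|$. Then I make the Moser change of variables $r=Re^{-t/N}$ and $w(t)=N^{(N-1)/N}\kappa_N^{1/N}v(r)$. This transforms $\int F^N(\nabla u^\star)\le1$ into $\int_0^\infty|w'|^N\,dt\le1$, and the integral becomes
\[
\kappa_N R^{N-\delta}\int_0^\infty e^{\lambda\lambda_N^{-1}|w|^{N/(N-1)}-(1-\delta/N)t}\,dt .
\]
With $\mu=\lambda/\lambda_N\le 1-\delta/N$, I would apply the Moser--Adams--Garsia one-dimensional lemma: $\sup\int_0^\infty e^{\gamma|w|^{N/(N-1)}-t}\,dt<\infty$ for $\gamma\le1$. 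Applying it after rescaling $t\mapsto(1-\delta/N)t$ yields uniform finiteness precisely when $\lambda\le(1-\delta/N)\lambda_N$. For the first (qualitative) assertion with arbitrary $\lambda$, I would use the standard splitting argument. Approximate $u$ by $\phi\in C_c^\infty$ with $\|F(\nabla(u-\phi))\|_N$ small, and use $|u|^{N/(N-1)}\le(1+\varepsilon)|u-\phi|^{N/(N-1)}+C_\varepsilon|\phi|^{N/(N-1)}$. The $\phi$ part is bounded, and the $u-\phi$ part is controlled by the uniform bound after normalization, since its norm is small.

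For sharpness I would test with anisotropic Moser functions centered at $0$: $m_k=\log k$ on $\mathcal{W}_{R/k}$, $\log(R/F^0)$ on the annulus, and $0$ outside, normalized by $(N\kappa_N)^{-1/N}(\log k)^{-(N-1)/N}$. One computes $\int F^N(\nabla m_k)=1$. The integral over $\mathcal{W}_{R/k}$ is then at least $c\,k^{\lambda/\lambda_N\cdot N-(N-\delta)}$, which blows up for $\lambda>(1-\delta/N)\lambda_N$, and a fortiori for $\lambda>\lambda_N$.

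The main obstacle is step (ii), the anisotropic P\'olya--Szeg\H{o} inequality, which requires the Wulff isoperimetric inequality and careful coarea handling on level sets. I would cite \cite{27,39} for it rather than reprove it. The remaining steps are a routine transcription of Moser's argument.
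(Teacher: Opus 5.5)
Your outline follows the route the paper relies on. The paper gives no proof of Lemma \ref{Le2.1} and attributes it to Zhu \cite{39}, who proves it by convex symmetrization \cite{27}; that is exactly your P\'olya--Szeg\H{o}/Hardy--Littlewood reduction to Wulff-radial functions, which you then finish with Moser's one-dimensional lemma and a density argument.

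A few constants need fixing, but none affects the argument.
\begin{itemize}
\item The Moser substitution should be $w=N^{(N-1)/N}(N\kappa_N)^{1/N}v=N\kappa_N^{1/N}v$. With your constant one gets $\int_0^\infty|w'|^N\,dt=\frac1N\int F^N(\nabla u^\star)$.
\item The test functions must be normalized by $(N\kappa_N)^{-1/N}(\log k)^{-1/N}$, so that they equal $(N\kappa_N)^{-1/N}(\log k)^{(N-1)/N}$ on $\mathcal{W}_{R/k}$, as the paper's $\tilde v_j$ do. Your factor gives $\int F^N(\nabla m_k)=(\log k)^{2-N}$, which is not $1$ for $N\geq 3$.
\item For sharpness, $R$ must be taken small enough that $\mathcal{W}_R\subset\Omega$, not defined by $\kappa_N R^N=|\Omega|$. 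This is where the hypothesis $0\in\Omega$ is used.
\end{itemize}
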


   Zhu \cite{39} also extended the above inequality to the entire Euclidean space $\mathbb{R}^{N}$, namely
   \begin{lemma}\label{Le2.2}
       For every $\lambda >0, 0\leq\delta<N, \tau>0, u\in W^{1,N}(\mathbb{R}^{N})(N\geq 2)$, there holds
       \begin{equation}\label{(2.1)}
\int_{\mathbb{R}^{N}}\frac{\Phi(N,\lambda|u|^{\frac{N}{N-1}})}{F^{0}(x)^{\delta}}dx<\infty.
       \end{equation}
       Furthermore, we have for every $\lambda\leq(1-\frac{\delta}{N})\lambda_{N}$,
       \begin{equation}\label{(2.2)}
           \sup\limits_{\int_{\mathbb{R}^{N}}(F^{N}(\nabla u)+\tau|u|^{N})dx\leq 1}\int_{\mathbb{R}^{N}}\frac{\Phi(N,\lambda|u|^{\frac{N}{N-1}})}{F^{0}(x)^{\delta}}dx<\infty,
       \end{equation}
      where $\lambda_{N}=N^{\frac{N}{N-1}}\kappa_{N}^{\frac{1}{N-1}}$, and this inequality is sharp as the supremum is infinity for every $\lambda>\lambda_{N}$.
   \end{lemma}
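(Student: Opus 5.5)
The plan is to derive Lemma \ref{Le2.2} from the bounded-domain inequality of Lemma \ref{Le2.1} by splitting $\mathbb{R}^{N}$ into a Wulff ball where $u$ may be large and an exterior region where $u$ is uniformly small. After replacing $u$ by $|u|$ we may assume $u\ge 0$. Using the anisotropic convex symmetrization of \cite{27}, we may further replace $u$ by its convex rearrangement $u^{\star}$, which depends only on $F^{0}(x)$ and is nonincreasing in it. This step is justified by three facts: the anisotropic P\'olya--Szeg\H{o} inequality $\int F^{N}(\nabla u^{\star})\le\int F^{N}(\nabla u)$, the equimeasurability $\int|u^{\star}|^{N}=\int|u|^{N}$, and the Hardy--Littlewood inequality. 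The last one applies because $F^{0}(x)^{-\delta}$ is itself symmetric decreasing, so $\int \Phi(N,\lambda u^{\frac{N}{N-1}})F^{0}(x)^{-\delta}$ does not decrease under rearrangement.

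For radially nonincreasing $u$ with $\tau\int|u|^{N}\le 1$, the pointwise estimate $u(x)^{N}\kappa_{N}F^{0}(x)^{N}\le\int_{\mathcal{W}_{F^{0}(x)}}u^{N}\le \tau^{-1}$ gives $u(x)\le C(\tau)/F^{0}(x)$. Fix $r_{0}$ so large that $u\le 1$ outside $\mathcal{W}_{r_{0}}$. On the exterior region we then use the elementary bound $\Phi(N,k)\le C k^{N-1}$ for $k\le\lambda$, together with $F^{0}(x)^{-\delta}\le r_{0}^{-\delta}$. This yields
\[
\int_{\mathbb{R}^{N}\setminus\mathcal{W}_{r_{0}}}\frac{\Phi(N,\lambda u^{\frac{N}{N-1}})}{F^{0}(x)^{\delta}}\,dx\le C\int u^{N}\le C/\tau .
\]

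On $\mathcal{W}_{r_{0}}$ we write $v=u-u(r_{0})$, which lies in $W_{0}^{1,N}(\mathcal{W}_{r_{0}})$ and satisfies $0\le u(r_0)\le C$. The key inequality is
\[
u^{\frac{N}{N-1}}\le (1+\varepsilon)v^{\frac{N}{N-1}}+C_{\varepsilon}u(r_{0})^{\frac{N}{N-1}}.
\]
We choose $\varepsilon$ so that $(1+\varepsilon)\int F^{N}(\nabla v)\le\bigl(1-\tau\int u^{N}\bigr)^{\frac{1}{N-1}}(1+\varepsilon)$ stays at most $1$. In the spirit of Ruf's argument, taking $\varepsilon$ comparable to $\tau\int u^N$ makes the constant $C_\varepsilon u(r_0)^{N/(N-1)}$ bounded, because $u(r_0)^N\lesssim \tau^{-1}\int u^N\cdot r_0^{-N}$. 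With that choice, Lemma \ref{Le2.1} applied to $v/\|F(\nabla v)\|_{N}$ on $\mathcal{W}_{r_0}$ at the exponent $\lambda\le(1-\frac{\delta}{N})\lambda_{N}$ bounds the interior integral uniformly. Here we use $\Phi\le e^{k}$. The main obstacle is exactly this balancing of $\varepsilon$ against $C_\varepsilon$, and I would follow Ruf's scheme verbatim with $|x|$ replaced by $F^{0}(x)$ and $\omega_{N-1}/N$ replaced by $\kappa_{N}$. This proves \eqref{(2.2)}.

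For \eqref{(2.1)} with arbitrary $\lambda$ and a fixed $u\in W^{1,N}$, we approximate $u$ by $w\in C_{c}^{\infty}$ with $\|u-w\|_{W^{1,N}}$ small. By convexity,
\[
|u|^{\frac{N}{N-1}}\le (1+\varepsilon)|u-w|^{\frac{N}{N-1}}+C_{\varepsilon}|w|^{\frac{N}{N-1}}.
\]
Combining $e^{a+b}\le \tfrac12(e^{2a}+e^{2b})$, the bound $\Phi\le e^k$ near the support of $w$, and \eqref{(2.2)} applied to $(u-w)/\|u-w\|$ reduces everything to the bounded function $w$ with compact support and the integrable weight $F^{0}(x)^{-\delta}$. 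Away from the support of $w$ only the term in $u-w$ remains, and this is controlled by \eqref{(2.2)}.

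Sharpness for $\lambda>\lambda_{N}$ in the case $\delta=0$ follows from the anisotropic Moser sequence $u_{k}$. This sequence equals $(\log k)^{\frac{N-1}{N}}\lambda_N^{-\frac{N-1}{N}}\cdot\mathrm{const}$ on $\mathcal{W}_{1/k}$ and $\log(1/F^{0}(x))/\log k$ suitably normalized on $\mathcal{W}_{1}\setminus\mathcal{W}_{1/k}$. Its $L^N$ norm tends to zero, so the $\tau$-term can be absorbed by a small rescaling, and the integral over $\mathcal{W}_{1/k}$ blows up. The same sequence on $\mathcal{W}_1$ also exhibits the blow-up for $\delta>0$, at the threshold $(1-\delta/N)\lambda_N$.
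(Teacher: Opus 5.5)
The paper gives no proof of Lemma~\ref{Le2.2}. It is quoted from Zhu \cite{39}, so there is no in-paper argument to compare with. Your reconstruction is correct and is the standard route. It uses convex symmetrization (Hardy--Littlewood applies because $F^{0}(x)^{-\delta}$ is its own Wulff rearrangement) and the decay bound $u^{\star}(x)\le(\kappa_{N}\tau)^{-1/N}F^{0}(x)^{-1}$. It then applies Ruf's splitting at $\mathcal{W}_{r_{0}}$ to reduce to Lemma~\ref{Le2.1}, density for \eqref{(2.1)}, and the Moser sequence for sharpness. Compared with the bare citation, it shows that the whole-space bound with the $\tau$-term needs only Lemma~\ref{Le2.1} and the anisotropic P\'olya--Szeg\H{o} inequality. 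It also gives blow-up for every $\lambda>(1-\frac{\delta}{N})\lambda_{N}$, which is stronger than the stated sharpness for $\lambda>\lambda_{N}$.

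Two places should be written more precisely.

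In the interior step, the quantity that must not exceed $1$ is $(1+\varepsilon)\left(\int_{\mathcal{W}_{r_{0}}}F^{N}(\nabla v)\,dx\right)^{\frac{1}{N-1}}$, not $(1+\varepsilon)\int F^{N}(\nabla v)\,dx$. Choose exactly $(1+\varepsilon)^{N-1}=(1-s)^{-1}$ with $s=\tau\int u^{N}dx$. In $(a+b)^{\frac{N}{N-1}}\le(1+\varepsilon)a^{\frac{N}{N-1}}+C_{\varepsilon}b^{\frac{N}{N-1}}$ one may then take $C_{\varepsilon}=\left(1-(1+\varepsilon)^{-(N-1)}\right)^{-\frac{1}{N-1}}=s^{-\frac{1}{N-1}}$. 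Combined with $u(r_{0})^{N}\le(\kappa_{N}r_{0}^{N})^{-1}\int u^{N}dx$ (no factor $\tau^{-1}$ is needed), this gives $C_{\varepsilon}u(r_{0})^{\frac{N}{N-1}}\le(\tau\kappa_{N}r_{0}^{N})^{-\frac{1}{N-1}}$. That bound depends only on $\tau$, which is what makes the supremum uniform.

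In the proof of \eqref{(2.1)}, \eqref{(2.2)} controls only $\Phi(N,\cdot)$, not $e^{k}$. So on the bounded set containing $\mathrm{supp}\,w$, note also that the polynomial part $e^{k}-\Phi(N,k)$ is integrable against $F^{0}(x)^{-\delta}$. This follows from H\"older's inequality and the embedding $W^{1,N}\hookrightarrow L^{q}$.
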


   We shall also need the following anisotropic singular Lions-type concentration-compactness lemma in $\mathbb{R}^{N}$, which can be proved in similar way in \cite{40}.
   \begin{lemma}\label{Le2.3}
       Let $\{u_{j}\}$ be a sequence in $S$ such that $\|u_{j}\|_{S}=1$ and $u_{j}\rightharpoonup u\not\equiv 0$ weakly in $S$.
       If $$0<p<\frac{1}{(1-\|u\|_{S}^{N})^{\frac{1}{N-1}}}$$ then
       \begin{equation}\label{(2.3)}
           \limsup\limits_{j\to \infty}\int_{\mathbb{R}^{N}}\frac{\Phi(N,\lambda_{N}(1-\frac{\delta}{N})p|u_{j}|^{\frac{N}{N-1}})}{F^{0}(x)^{\delta}}dx<+\infty.
       \end{equation}
   \end{lemma}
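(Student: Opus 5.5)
The plan is to follow the Lions-type argument of \cite{40}: split $u_j$ into a part we control near $u$ and a small remainder. Write $\tilde\lambda=\lambda_N(1-\frac{\delta}{N})$ and $v_j=u_j-u$. Since $u_j\rightharpoonup u$ in $S$, the Brezis--Lieb type identity for the gradient and potential terms gives
\begin{equation*}
\limsup_{j\to\infty}\|v_j\|_S^N\le 1-\|u\|_S^N.
\end{equation*}
For $N=2$ this is exact by Hilbert-space orthogonality. For general $N$ with the anisotropic norm, I would obtain it from a.e.\ convergence of $\nabla u_j$ (or, more simply, from a convexity argument on $F^N$ along the segment between $u$ and $u_j$). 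If this route is awkward, I would instead use the weaker but sufficient fact that for every $\varepsilon>0$ there is $w\in C_c^\infty$ with $\|u-w\|_S<\varepsilon$, and decompose $u_j=(u_j-w)+w$. Then $\limsup\|u_j-w\|_S^N\le 1-\|u\|_S^N+o_\varepsilon(1)$, again by weak lower semicontinuity together with the uniform convexity of $\|\cdot\|_S$.

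Next comes the elementary inequality. For $a,b\ge0$ and $\theta>0$,
\begin{equation*}
(a+b)^{\frac{N}{N-1}}\le(1+\theta)^{\frac{1}{N-1}}a^{\frac{N}{N-1}}+C_\theta b^{\frac{N}{N-1}}.
\end{equation*}
Applying it with $a=|v_j|$ and $b=|u|$, we use $\Phi(N,s+t)\le$ a sum of products of $e^{s}$-type and $e^{t}$-type terms, and handle the polynomial corrections in $\Phi$ by the embedding $S\hookrightarrow L^q$. After Hölder with exponents $r,r'$ (with $r$ close to $1$), the quantity in \eqref{(2.3)} is bounded by
\begin{equation*}
\Big(\int\frac{\Phi(N,r p(1+\theta)^{\frac{1}{N-1}}\tilde\lambda |v_j|^{\frac{N}{N-1}})}{F^{0}(x)^{\delta}}\Big)^{1/r}\Big(\int\frac{\Phi(N,r'pC_\theta\tilde\lambda|u|^{\frac{N}{N-1}})}{F^{0}(x)^{\delta}}\Big)^{1/r'}
\end{equation*}
plus lower-order terms. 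The second factor is finite by \eqref{(2.1)} of Lemma \ref{Le2.2}, since $u$ is fixed. For the first factor, we normalize $v_j$ by $\|v_j\|_S$. By the hypothesis $p(1-\|u\|_S^N)^{\frac{1}{N-1}}<1$, we can choose $\theta$ small and $r$ near $1$ so that, for $j$ large,
\begin{equation*}
rp(1+\theta)^{\frac{1}{N-1}}\|v_j\|_S^{\frac{N}{N-1}}<1.
\end{equation*}
Hence the exponent is at most $\tilde\lambda|v_j/\|v_j\|_S|^{\frac{N}{N-1}}$, and \eqref{(2.2)} with $\tau=V^*$ gives a uniform bound. Here we use $(A_1)$ to compare $\|\cdot\|_S$ with $\int F^N(\nabla\cdot)+V^*|\cdot|^N$, and monotonicity of $\Phi(N,\cdot)$.

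A technical point is that the $\Phi$ splitting must be done with care: $\Phi(N,s)\le e^{s}$ only bounds from above, while integrability at infinity needs the subtracted polynomial terms. I would handle this by splitting $\mathbb{R}^N$ into $\{|u_j|\ge1\}$, which has uniformly bounded measure by the $L^N$ bound, and its complement. On the first set, use $e^{s}$ together with the Hölder/weight estimate already displayed in the paper; on the second, $\Phi(N,s)\le Cs^{N-1}$ reduces to $\int|u_j|^N/F^0(x)^\delta\le C$.

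The main obstacle I expect is the asymptotic norm splitting $\limsup\|u_j-u\|_S^N\le1-\|u\|_S^N$ in the anisotropic, non-Hilbert setting $N>2$. If it fails as stated, I would fall back to the standard trick of \cite{40}: a truncation $u_j=\min(u_j,L)+(u_j-L)^+$ at level sets, which uses only $\int F^N(\nabla u_j)$ splitting exactly across level sets, combined with weak lower semicontinuity of $\int F^N(\nabla\cdot)+V|\cdot|^N$ at $u$.
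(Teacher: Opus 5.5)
\noindent Your main line fails at its first step. The splitting $\limsup_{j}\|u_j-u\|_S^N\le 1-\|u\|_S^N$ is false in general, so the first H\"older factor is never controlled. The potential term does split: Brezis--Lieb applies because $u_j\to u$ a.e.\ by Lemma~\ref{Le2.5}. The gradient term, however, would need $\nabla u_j\to\nabla u$ a.e., and weak convergence does not give this. Convexity of $F^N$ and uniform convexity of $\|\cdot\|_S$ give only weak lower semicontinuity and the Radon--Riesz property, not a quantitative splitting. Your $N=2$ remark is also wrong in this setting: $\|\cdot\|_S$ is Hilbertian only if $F^2$ is a quadratic form. For instance, with $F(\xi)=(\xi_1^4+\xi_2^4)^{1/4}$, an oscillation along $e_1$ superposed on a constant gradient $ae_2$ violates the splitting, since $\sqrt{a^4+g^4}<a^2+g^2$. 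The splitting fails even for Euclidean $F$ and $N=3$. Take $\nabla u\equiv e_1$ on a cube $Q$, $\phi\in C_c^\infty(Q)$ equal to $1$ off a thin boundary layer, and $u_j=u+j^{-1}\phi(x)\zeta(jx_1)$, where $\zeta$ is $1$-periodic with $\zeta'=-10$ on a tenth of each period and $\zeta'=10/9$ on the rest. Then $u_j\to u$ uniformly and weakly in $S$. Per unit volume where $\phi=1$, the gradient energy of $u_j$ exceeds that of $u$ by about $80.4$, while that of $u_j-u$ is about $101.2$. After normalizing by $\|u_j\|_S$, this gives $\lim_j\|u_j-u\|_S^N>1-\|u\|_S^N$. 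Approximating $u$ by a smooth $w$ changes nothing, so your second variant fails for the same reason. The obstacle you flagged is therefore not something a.e.\ convergence or convexity can remove.

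Your fallback is the correct argument and should be the proof; the paper itself only defers to [40]. Since signs are not assumed, truncate $|u_j|$: set $T_Lu_j=\min\{|u_j|,L\}$ and $w_j=(|u_j|-L)^+$. Because $F$ is even, $F(\nabla|u_j|)=F(\nabla u_j)$. The gradients of $T_Lu_j$ and $w_j$ live on disjoint sets, and $(T_Lu_j)^N+w_j^N\le|u_j|^N$, so $\|T_Lu_j\|_S^N+\|w_j\|_S^N\le 1$. Pass to a subsequence that realizes the $\limsup$ and converges a.e.; then $T_Lu_j\rightharpoonup T_Lu:=\min\{|u|,L\}$ in $S$. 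Apply lower semicontinuity at $T_Lu$, not at $u$, to get $\limsup_j\|w_j\|_S^N\le 1-\|T_Lu\|_S^N$, which tends to $1-\|u\|_S^N$ as $L\to\infty$. Now $|u_j|\le w_j+L$ and your elementary inequality give $|u_j|^{\frac{N}{N-1}}\le(1+\theta)^{\frac{1}{N-1}}w_j^{\frac{N}{N-1}}+C_\theta L^{\frac{N}{N-1}}$. The bounded piece therefore contributes only the constant factor $e^{C_\theta p\tilde\lambda L^{N/(N-1)}}$, so no H\"older splitting against $u$ and no appeal to \eqref{(2.1)} is needed. Choose $\theta$ small and $L$ large so that $p^{N-1}(1+\theta)(1-\|T_Lu\|_S^N)<1$. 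On $\{|u_j|>L\}$, whose measure is at most $(V^*L^N)^{-1}$, apply \eqref{(2.2)} with $\tau=V^*$ to $w_j/\|w_j\|_S$. The polynomial part of $e^s-\Phi(N,s)$ and $\int F^0(x)^{-\delta}dx$ over this set are bounded by the embeddings. On $\{|u_j|\le L\}$, use $\Phi(N,s)\le C_Ls^{N-1}$, which reduces everything to $\int|u_j|^NF^0(x)^{-\delta}dx\le C\|u_j\|_S^N$.
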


   \begin{lemma}\label{Le2.4} \rm(See \cite{23})
       Let $k\geq 0$, $p\geq 1$ be real numbers and $N\geq 2$ be an integer. Then there holds
       \begin{equation}\label{(2.4)}
           (\Phi(N,k))^{p}\leq \Phi(N,pk).
       \end{equation}
   \end{lemma}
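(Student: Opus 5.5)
We have to prove Lemma \ref{Le2.4}: $(\Phi(N,k))^{p}\leq \Phi(N,pk)$ for $k\geq 0$, $p\geq 1$. The plan is to write $\Phi(N,k)=\sum_{j\geq N-1}\frac{k^{j}}{j!}$ and work directly with this power series.

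The case $k=0$ is trivial, since both sides vanish when $N\geq 2$. Fix $k>0$. I would first factor out the leading power:
\begin{equation*}
\Phi(N,k)=k^{N-1}\psi(k),\qquad \psi(k)=\sum_{j\geq 0}\frac{k^{j}}{(j+N-1)!}.
\end{equation*}
The inequality is then equivalent to
\begin{equation*}
k^{(N-1)p}\psi(k)^{p}\leq p^{N-1}k^{N-1}\psi(pk).
\end{equation*}

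A cleaner route is to fix $k>0$, set $h(p)=\log\Phi(N,pk)-p\log\Phi(N,k)$ for $p\geq 1$, note $h(1)=0$, and show $h$ is nondecreasing. We have
\begin{equation*}
h'(p)=k\,\frac{\Phi'(N,pk)}{\Phi(N,pk)}-\log\Phi(N,k),
\end{equation*}
where $\Phi'$ is the derivative in the second variable. The sign of $h'$ is controlled by two facts.
\begin{itemize}
\item[(i)] The function $s\mapsto s\Phi'(N,s)/\Phi(N,s)$ is nondecreasing in $s>0$. This holds because $\Phi(N,e^{t})$ is a power series in $e^{t}$ with nonnegative coefficients, so $\log\Phi(N,e^{t})$ is convex in $t$; this is log-convexity via Cauchy--Schwarz or H\"older on the series.
\item[(ii)] $k\Phi'(N,k)/\Phi(N,k)\geq\log\Phi(N,k)$.
\end{itemize}

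Granting both, for $p\geq1$ we get
\begin{equation*}
h'(p)=\frac1p\cdot\frac{pk\,\Phi'(N,pk)}{\Phi(N,pk)}\geq \frac1p\cdot\frac{k\Phi'(N,k)}{\Phi(N,k)}.
\end{equation*}
This inequality has an inconvenient factor $1/p$, so I would instead work with $g(p)=\frac{1}{p}\log\Phi(N,pk)$ and show it is nondecreasing, which is exactly the claim. Write $L(s)=\log\Phi(N,s)$. Then $g(p)=L(pk)/p$, and $g$ is nondecreasing iff $sL'(s)\geq L(s)$ for all $s>0$. Since $L'(s)=\Phi'(N,s)/\Phi(N,s)$, this is precisely (ii) applied at $s=pk$. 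So everything reduces to (ii): for all $s>0$,
\begin{equation*}
s\Phi'(N,s)\geq\Phi(N,s)\log\Phi(N,s).
\end{equation*}

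This is the main obstacle. Note that $s\Phi'(N,s)=\sum_{j\geq N-1}\frac{js^{j}}{j!}\geq (N-1)\Phi(N,s)$. Moreover $\Phi'(N,s)=\Phi(N,s)+\frac{s^{N-2}}{(N-2)!}\geq\Phi(N,s)$, so $s\Phi'\geq s\Phi$. Since $\Phi(N,s)\leq e^{s}$, we have $\log\Phi(N,s)\leq s$. Combining, $s\Phi'(N,s)\geq s\Phi(N,s)\geq \Phi(N,s)\log\Phi(N,s)$, valid whether or not $\log\Phi$ is positive. This settles (ii), so $L(s)/s$ is nondecreasing. Therefore $L(pk)\geq pL(k)$ for $p\geq1$, which is $(\Phi(N,k))^{p}\leq\Phi(N,pk)$. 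The only points to double-check are the identity $\Phi'(N,s)=\Phi(N,s)+s^{N-2}/(N-2)!$ and the bound $\Phi\leq e^{s}$, both immediate from the series.
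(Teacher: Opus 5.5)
Your proof is correct. The paper gives no proof of this lemma and only refers to Yang's earlier work, so your argument is a self-contained alternative rather than a variant of anything in the text.

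The decisive step is right. For $s>0$ you have $\Phi(N,s)>0$, $\Phi'(N,s)=\Phi(N,s)+\frac{s^{N-2}}{(N-2)!}\geq\Phi(N,s)$ and $\log\Phi(N,s)\leq s$. Hence $s\Phi'(N,s)\geq s\Phi(N,s)\geq\Phi(N,s)\log\Phi(N,s)$, so $s\mapsto s^{-1}\log\Phi(N,s)$ is nondecreasing on $(0,\infty)$. Comparing its values at $k$ and $pk\geq k$ gives exactly $(\Phi(N,k))^{p}\leq\Phi(N,pk)$, for all real $p\geq1$ at once.

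For comparison, another natural route fixes $p$ and argues by induction on $N$ with $D_N(t)=\Phi(N,pt)-\Phi(N,t)^{p}$. It uses $\partial_t\Phi(N,t)=\Phi(N-1,t)$, $\Phi(1,t)=e^{t}$ and $\Phi(N-1,t)\geq\Phi(N,t)\geq0$. Then $D_N(0)=0$, and by the induction hypothesis $\Phi(N-1,pt)\geq\Phi(N-1,t)^{p}$,
$D_N'(t)=p\Phi(N-1,pt)-p\Phi(N,t)^{p-1}\Phi(N-1,t)\geq p\Phi(N-1,t)\bigl(\Phi(N-1,t)^{p-1}-\Phi(N,t)^{p-1}\bigr)\geq0$.
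Your version needs no induction and uses only $\Phi'\geq\Phi$ and $\Phi\leq e^{s}$. It also yields the slightly stronger fact that $\Phi(N,s)^{1/s}$ is nondecreasing.

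For the write-up, drop the detour through $h(p)$ and fact (i). Fact (i) is true but never used. The displayed identity $h'(p)=\frac1p\cdot\frac{pk\,\Phi'(N,pk)}{\Phi(N,pk)}$ is wrong as written, because it omits the term $-\log\Phi(N,k)$. All you need is $g(p)=L(pk)/p$ and $g'(p)=p^{-2}\bigl(sL'(s)-L(s)\bigr)\big|_{s=pk}$, together with an explicit remark that $\Phi(N,s)>0$ for $s>0$, so that $L$ is defined and smooth there.
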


   The following lemma provides the compact embedding $S\hookrightarrow\hookrightarrow L^{q}(\mathbb{R}^{N})$ for every $q\geq 1$; it can be proved in the same way as Lemma 2.4 of \cite{23}.
   \begin{lemma}\label{Le2.5}
      Assume that $(A_{1})$ and $(A_{2})$ are satisfied. Then $S$ is compactly embedded in $L^{q}(\mathbb{R}^{N})$ for every $q\geq 1$.
  \end{lemma}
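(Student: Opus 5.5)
The plan is to prove Lemma \ref{Le2.5} in two steps. First we show that $S$ embeds compactly into $L^{1}(\mathbb{R}^{N})$. Then we interpolate against the continuous embeddings $S\hookrightarrow L^{q}(\mathbb{R}^{N})$, $q\geq N$, which were already recorded after the definition of $S$.

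For the $L^{1}$ step we use $(A_{2})$ through Hölder's inequality with exponents $N$ and $\frac{N}{N-1}$. For any measurable set $E$ and $u\in S$,
\begin{equation*}
\int_{E}|u|\,dx=\int_{E}V^{\frac1N}|u|\,V^{-\frac1N}dx\leq\Big(\int_{E}V|u|^{N}dx\Big)^{\frac1N}\Big(\int_{E}V^{-\frac{1}{N-1}}dx\Big)^{\frac{N-1}{N}}\leq \|u\|_{S}\,\Big\|\tfrac1V\Big\|_{L^{\frac{1}{N-1}}(E)}^{\frac1N}.
\end{equation*}
Taking $E=\mathbb{R}^{N}$ shows that $S\hookrightarrow L^{1}(\mathbb{R}^{N})$ continuously. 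Taking $E=\{F^{0}(x)>R\}$ shows that the $L^{1}$ tails are small uniformly on bounded subsets of $S$, because $\int_{F^{0}>R}V^{-\frac{1}{N-1}}dx\to0$ as $R\to\infty$ by $(A_{2})$.

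Now let $\{u_{j}\}$ be bounded in $S$. By reflexivity we may pass to a subsequence with $u_{j}\rightharpoonup u$ in $S$; the limit lies in $S$ by weak lower semicontinuity (Fatou applied to $\int V|u_j|^N$). On each Wulff ball $\mathcal{W}_{R}$, which is bounded and comparable to a Euclidean ball, the Rellich--Kondrachov theorem gives $u_{j}\to u$ in $L^{1}(\mathcal{W}_{R})$, since $\{u_{j}\}$ is bounded in $W^{1,N}(\mathcal{W}_{R})$. Let $M=\sup_j\|u_j\|_S$. Splitting $\int|u_{j}-u|$ into the part over $\mathcal{W}_{R}$ and the part over its complement, the tail estimate gives
\begin{equation*}
\limsup_{j\to\infty}\|u_{j}-u\|_{L^{1}}\leq 2M\,\Big\|\tfrac1V\Big\|_{L^{\frac{1}{N-1}}(F^{0}>R)}^{\frac1N},
\end{equation*}
which tends to $0$ as $R\to\infty$. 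Hence $u_{j}\to u$ in $L^{1}(\mathbb{R}^{N})$.

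For general $q\geq1$, take any $r>q$ with $r\geq N$, and write $\frac1q=\frac{\theta}{1}+\frac{1-\theta}{r}$ with $\theta\in(0,1]$. The interpolation inequality gives
\begin{equation*}
\|u_{j}-u\|_{L^{q}}\leq\|u_{j}-u\|_{L^{1}}^{\theta}\,\|u_{j}-u\|_{L^{r}}^{1-\theta}.
\end{equation*}
The second factor is bounded by the continuous embedding $S\hookrightarrow L^{r}(\mathbb{R}^{N})$, so $u_{j}\to u$ in $L^{q}$. This proves the lemma.

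The main obstacle is the uniform control of the tails outside large Wulff balls, since the rest is standard. Assumption $(A_{2})$ is tailored exactly to this step through the Hölder pairing above, and that is also what delivers compactness all the way down to $q=1$ rather than only for $q\geq N$.
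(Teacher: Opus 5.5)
Your proof is correct and follows essentially the argument the paper defers to (Lemma 2.4 of \cite{23}). H\"older's inequality with $(A_{2})$ gives $S\hookrightarrow L^{1}(\mathbb{R}^{N})$ with uniformly small $L^{1}$ tails outside large Wulff balls, Rellich--Kondrachov handles each $\mathcal{W}_{R}$, and interpolation against $S\hookrightarrow L^{r}(\mathbb{R}^{N})$, $r\geq N$, upgrades $L^{1}$ convergence to $L^{q}$ convergence for every $q\geq 1$; your Fatou remark is unnecessary, since a weak limit in $S$ already belongs to $S$.
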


  Next, set
    \begin{equation*}
       d_{X,Y}=\frac{\langle F^{N-1}(X)F_{\xi}(X)-F^{N-1}(Y)F_{\xi}(Y), X-Y\rangle}{F^{N}(X-Y)},
    \end{equation*}
    \begin{equation*}
         d_{0}= \inf\{ d_{X,Y}:X,Y\in\mathbb{R}^{N}, X\neq 0, Y\neq 0, X\neq Y\},
    \end{equation*}
      then we obtain
      \begin{equation} \label{(2.5)}
         \langle F^{N-1}(X)F_{\xi}(X)-F^{N-1}(Y)F_{\xi}(Y), X-Y \rangle \geq d_{0}F^{N}(X-Y).
      \end{equation}

   \begin{lemma}\label{Le2.7}\rm(See \cite{41})
       \begin{equation}\label{(2.6)}
           \langle |c|^{N-2}c-|d|^{N-2}d, c-d \rangle \geq 2^{2-N}|c-d|^{N}, \quad \forall c, d\in\mathbb{R}^{N}(N\geq 2).
       \end{equation}
   \end{lemma}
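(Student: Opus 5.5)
The plan is to reduce \eqref{(2.6)} to a one-dimensional inequality by exploiting symmetry and homogeneity. If $c=d$ there is nothing to prove, so assume $c\neq d$. Both sides are positively homogeneous of degree $N$ under $(c,d)\mapsto(tc,td)$, $t>0$, and both are invariant under a common orthogonal transformation of $c$ and $d$. I would not normalise $|c-d|=1$ directly. Instead I would use the standard integral representation, which is cleaner. Set $p=N\geq 2$ and $\phi(\xi)=\frac1p|\xi|^p$. Then $\nabla\phi(\xi)=|\xi|^{p-2}\xi$ and
\begin{equation*}
D^2\phi(\xi)=|\xi|^{p-2}\Big(I+(p-2)\frac{\xi\otimes\xi}{|\xi|^2}\Big)\geq |\xi|^{p-2}I .
\end{equation*}
This uses $p\geq 2$; it is the only place the restriction on $N$ enters.

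Writing $\xi_s=d+s(c-d)$, $s\in[0,1]$, we get
\begin{equation*}
\langle |c|^{N-2}c-|d|^{N-2}d,c-d\rangle=\int_0^1\langle D^2\phi(\xi_s)(c-d),c-d\rangle ds\geq |c-d|^2\int_0^1|\xi_s|^{N-2}ds .
\end{equation*}
The segment $\xi_s$ may pass through $0$, where $D^2\phi$ is singular when $N<3$. This is harmless: the line meets $0$ for at most one value of $s$, and $\nabla\phi$ is absolutely continuous along the segment, so the fundamental theorem of calculus still applies.

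The key step, and the only real obstacle, is the lower bound
\begin{equation*}
\int_0^1|d+s(c-d)|^{N-2}ds\geq 2^{2-N}|c-d|^{N-2}.
\end{equation*}
For $N=2$ it is trivial, with equality. For $N>2$, I would use that $s\mapsto|\xi_s|$ is convex and bounded below by the distance to the segment's midpoint, so the argument runs as follows. Set $e=c-d$ and $m=\xi_{1/2}$. By the triangle inequality,
\begin{equation*}
|\xi_s|\geq\big||s-\tfrac12||e|-|m|\big| .
\end{equation*}
A cleaner alternative is to split at the midpoint. For $s\in[0,\frac12]$ and $s'=1-s$ we have $\xi_{s'}-\xi_s=(1-2s)e$, so
\begin{equation*}
\max(|\xi_s|,|\xi_{s'}|)\geq \tfrac12(1-2s)|e| .
\end{equation*}
Pairing $s$ with $1-s$ gives
\begin{equation*}
\int_0^1|\xi_s|^{N-2}ds\geq\int_0^{1/2}\big(\tfrac12(1-2s)|e|\big)^{N-2}ds=\frac{|e|^{N-2}}{2^{N-1}(N-1)} .
\end{equation*}
This yields the inequality, but with constant $\frac{2^{3-N}}{(N-1)}\cdot\frac12$ rather than $2^{2-N}$. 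So the crude pairing loses a factor of $N-1$.

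To obtain exactly $2^{2-N}$, I would instead use the classical elementary route, as in \cite{41}. First expand the left side as
\begin{equation*}
|c|^N+|d|^N-(|c|^{N-2}+|d|^{N-2})\langle c,d\rangle .
\end{equation*}
Then use the identity
\begin{equation*}
\langle |c|^{N-2}c-|d|^{N-2}d,c-d\rangle=\tfrac12(|c|^{N-2}+|d|^{N-2})|c-d|^2+\tfrac12(|c|^{N-2}-|d|^{N-2})(|c|^2-|d|^2).
\end{equation*}
The second term is nonnegative since $t\mapsto t^{N-2}$ is monotone. For the first term, the elementary convexity bound
\begin{equation*}
|c|^{N-2}+|d|^{N-2}\geq 2^{3-N}(|c|+|d|)^{N-2}\geq 2^{3-N}|c-d|^{N-2}
\end{equation*}
(power mean inequality, valid for $N-2\geq1$; for $N-2\in(0,1)$ not relevant since $N$ is an integer) gives $\tfrac12\cdot2^{3-N}|c-d|^N=2^{2-N}|c-d|^N$. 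I would present this identity-based argument as the proof. Its only nontrivial input is verifying the identity by direct expansion together with the power-mean step, and I would take care there that $N=2$ gives equality consistently.
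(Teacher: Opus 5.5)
The paper gives no proof of this inequality; it only cites \cite{41}. Your final identity-based argument is correct, complete, and is the standard elementary proof of the cited result: you split the left-hand side into $\tfrac12(|c|^{N-2}+|d|^{N-2})|c-d|^2$ plus the nonnegative term $\tfrac12(|c|^{N-2}-|d|^{N-2})(|c|^2-|d|^2)$, then use convexity of $t\mapsto t^{N-2}$ for $N\geq 3$ and $|c|+|d|\geq|c-d|$, with $N=2$ giving equality.

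In the write-up, omit the exploratory integral-representation paragraph. It is unnecessary and contains small slips: the pairing bound actually yields the constant $2^{1-N}/(N-1)$, a loss of $2(N-1)$ rather than $N-1$. Also, for integer $N\geq 2$ the Hessian of $\frac1N|\xi|^N$ is never singular; it is the identity when $N=2$.
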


   \section{Some results related to functionals}

  Suppose that $f$ satisfies $(B_{1})$. Then, for every $(x,t)\in \mathbb{R}^{N}\times\mathbb{R}$, there exist constants $\lambda_{1}>\lambda_{0}$ and $a_{3}>0$ such that $G(x,t)\leq a_{3}\Phi\left(N,\lambda_{1}|t|^{\frac{N}{N-1}}\right)$. Hence, by Lemma \ref{Le2.2}, $\frac{G(x,u)}{F^{0}(x)^{\delta}}\in L^{1}(\mathbb{R}^{N})$ for every $u\in W^{1,N}(\mathbb{R}^{N})$. We now define the functionals $E, E_{\epsilon}: S\mapsto \mathbb{R}$ by
   \begin{equation}\label{(3.1)}
     E(u)=\frac{1}{N}\|u\|_{S}^{N}-\int_{\mathbb{R}^{N}}\frac{G(x,u)}{F^{0}(x)^{\delta}}dx,
   \end{equation}
    and
   \begin{equation}\label{(3.2)}
     E_{\epsilon}(u)=\frac{1}{N}\|u\|_{S}^{N}-\int_{\mathbb{R}^{N}}\frac{G(x,u)}{F^{0}(x)^{\delta}}dx-\epsilon\int_{\mathbb{R}^{N}}hudx.
   \end{equation}

   In order to apply critical point theory, we need the regularity of the functionals $E$ and $E_{\epsilon}$. From Proposition 1 in \cite{24} and standard arguments in \cite{42}, both $E$ and $E_{\epsilon}$ belong to $\mathcal{C}^{1}(S,\mathbb{R})$.
   A straightforward computation gives
   \begin{equation}\label{(3.3)}
       \langle E^{'}(u), \psi \rangle =\int_{\mathbb{R}^{N}}(F^{N-1}(\nabla u)F_{\xi}(\nabla u)\nabla \psi+V(x)|u|^{N-2}u\psi)dx-\int_{\mathbb{R}^{N}}\frac{f(x,u)}{F^{0}(x)^{\delta}}\psi dx,
   \end{equation}

   \begin{equation}\label{(3.4)}
    \begin{aligned}
       \langle E^{'}_{\epsilon}(u), \psi \rangle &=\int_{\mathbb{R}^{N}}(F^{N-1}(\nabla u)F_{\xi}(\nabla u)\nabla \psi+V(x)|u|^{N-2}u\psi)dx
       -\int_{\mathbb{R}^{N}}\frac{f(x,u)}{F^{0}(x)^{\delta}}\psi dx\\
       &\quad-\epsilon\int_{\mathbb{R}^{N}}h\psi dx
    \end{aligned}
   \end{equation}
   for all $\psi\in S$.
   Therefore, weak solutions of \eqref{(1.5)} and \eqref{(1.6)} correspond exactly to critical points of $E$ and $E_{\epsilon}$, respectively.

   In the next three lemmas, we establish the geometric conditions of the mountain-pass theorem for the functionals $E$ and $E_{\epsilon}$.

   \begin{lemma}\label{Le3.1}
       Suppose that $(A_{1})$, $(B_{1})$, $(B_{2})$ and $(B_{3})$ are satisfied. Then there holds $E_{\epsilon}(ku)\to -\infty$ as $k\to +\infty$, for every nonnegative, compactly supported function $u\in W^{1,N}(\mathbb{R}^{N})\backslash\{0\}$.
   \end{lemma}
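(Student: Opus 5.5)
The plan is to show that $G$ grows faster than $t^{\eta}$ with $\eta>N$, uniformly in $x$, for large $t$. This superlinear growth then beats the $k^{N}$ growth of $\frac{1}{N}\|ku\|_S^N$ and the linear growth of the perturbation term.

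\textbf{Step 1 (growth of $G$).} For $t>0$, $(B_{2})$ gives $\frac{f(x,t)}{G(x,t)}\geq \frac{\eta}{t}$. Integrating from $T_{0}$ to $t\geq T_{0}$ yields
\[
G(x,t)\geq \frac{G(x,T_{0})}{T_{0}^{\eta}}\,t^{\eta}.
\]
We need $G(x,T_{0})\ge c_0>0$ uniformly in $x$. To get this we use $(B_{3})$: for $t\geq T_{0}$ we have $G'(x,t)=f(x,t)\geq G(x,t)/R_{0}$, hence $G(x,t)\geq G(x,T_{0})e^{(t-T_{0})/R_{0}}$, which is even stronger.

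The uniformity of the constant in $x$ is the main obstacle, since $(B_2)$ only gives $G(x,T_0)>0$ pointwise. For the argument, however, it suffices to have a lower bound on the compact set $K=\operatorname{supp}u$. There $G(\cdot,T_{0})$ is continuous, because $f$ is continuous, and positive by $(B_{2})$. So $c_0:=\min_{K}G(x,T_{0})>0$. This gives $G(x,t)\geq c_{0}T_0^{-\eta}t^{\eta}$ for all $x\in K$ and $t\ge T_0$. Combined with $G\geq 0$ for $t\geq 0$ (again by $(B_{2})$), we get $G(x,t)\ge C_1t^\eta - C_2$ for $x\in K$ and all $t\ge0$.

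\textbf{Step 2 (estimate of $E_\epsilon(ku)$).} Since $u\ge0$ is supported in $K$, we have
\[
E_{\epsilon}(ku)\leq \frac{k^{N}}{N}\|u\|_{S}^{N}-C_{1}k^{\eta}\int_{K}\frac{u^{\eta}}{F^{0}(x)^{\delta}}dx+C_{2}\int_{K}\frac{dx}{F^{0}(x)^{\delta}}+k\epsilon\,|\langle h,u\rangle|.
\]
All quantities are finite:
\begin{itemize}
\item $\|u\|_S<\infty$, because $u\in W^{1,N}$ has compact support and $V$ is continuous, so $u\in S$.
\item $\int_K F^0(x)^{-\delta}dx<\infty$, since $\delta<N$ and $F^0$ is comparable to $|x|$.
\item $\int_K u^\eta F^0(x)^{-\delta}dx$ is finite by the H\"older argument displayed after $(B_4)$, applied with $u\in L^{q}$ for every $q$.
\item $\int_K u^\eta F^0(x)^{-\delta}dx$ is positive because $u\not\equiv0$.
\item $\langle h,u\rangle$ is finite because $h\in S^*$.
\end{itemize}

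\textbf{Step 3 (conclusion).} Since $\eta>N>1$, the negative term $-C k^{\eta}$ dominates the terms $k^N$ and $k$ as $k\to+\infty$. Hence $E_\epsilon(ku)\to-\infty$. The case $\epsilon=0$ gives the statement for $E$.
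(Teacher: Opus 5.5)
Your proof is correct and follows the paper's argument: integrate the Ambrosetti--Rabinowitz inequality from $(B_{2})$ to get $G(x,t)\ge G(x,T_{0})T_{0}^{-\eta}t^{\eta}$, deduce $G\ge C_{1}t^{\eta}-C_{2}$ on the support of $u$, and let the $k^{\eta}$ term dominate the $k^{N}$ and $O(k)$ terms. Two remarks: your justification of $\min_{K}G(\cdot,T_{0})>0$ via continuity and compactness, and of $u\in S$, makes explicit what the paper leaves implicit; the exponential bound from $(B_{3})$ is an unneeded detour, since it carries the same non-uniform factor $G(x,T_{0})$ and does not by itself give the uniformity.
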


   \begin{proof}
       By $(B_{2})$ and $(B_{3})$, there exists $T_{0}>0$ such that $G(x,t)>0$ and $\eta G(x,t)\leq t \frac{\partial }{\partial t} G(x,t)$ for every $(x,t)\in\mathbb{R}^{N}\times[T_{0},\infty)$. It follows that $\frac{\partial }{\partial t}(\ln G(x,t))\geq \frac{\eta}{t}$, and hence $G(x,t)\geq G(x,T_{0})T_{0}^{-\eta}t^{\eta}$. Suppose that $u$ is supported in a bounded domain $\Omega$. Then there exist $b_{1}, b_{2}>0$ such that $G(x,t)\geq b_{1}t^{\eta}-b_{2}$ for every $(x,t)\in \Omega\times[0,\infty)$. We have
        \begin{align*}
            E_{\epsilon}(ku)
            &=\frac{1}{N}\|ku\|_{S}^{N}-\int_{\Omega}\frac{G(x,ku)}{F^{0}(x)^{\delta}}dx-\epsilon k\int_{\Omega}hu dx\\
            &\leq \frac{1}{N}k^{N}\|u\|_{S}^{N}-b_{1}k^{\eta}\int_{\Omega}\frac{|u|^{\eta}}{F^{0}(x)^{\delta}}dx+O(k).
        \end{align*}
        Since $\eta>N$, we conclude that $E_{\epsilon}(ku)\to -\infty$ as $k\to +\infty$. This completes the proof of Lemma \ref{Le3.1}.
   \end{proof}

   \begin{lemma}\label{Le3.2}
       Suppose that $(A_{1})$, $(B_{1})$ and $(B_{4})$ are satisfied. Then there exists $\epsilon_{1}>0$ such that when $0<\epsilon<\epsilon_{1}$, $l_{\epsilon}, \theta_{\epsilon}>0$ can be found such that $E_{\epsilon}(u)\geq \theta_{\epsilon}$ for every $u$ with $\|u\|_{S}=l_{\epsilon}$, where $l_{\epsilon}$ can be further chosen such that $l_{\epsilon}\to 0$ as $\epsilon\to 0$.
        When $\epsilon =0$, there exist $l>0$ and $\gamma >0$ such that $E(u)\geq\gamma$ for every $\|u\|_{S}=l$.
   \end{lemma}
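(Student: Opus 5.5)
We split $G$ into a small-$t$ part controlled by $(B_{4})$ and a large-$t$ part controlled by $(B_{1})$ and Lemma \ref{Le2.2}. By $(B_{4})$, pick $\tau>0$ with $\lambda_{\delta}-\tau>0$ and $r>0$ such that
$$G(x,t)\leq \frac{\lambda_{\delta}-\tau}{N}|t|^{N}\quad\text{for } |t|\leq r .$$
For $|t|\geq r$, the bound $G(x,t)\leq a_{3}\Phi(N,\lambda_{1}|t|^{\frac{N}{N-1}})$ from the start of this section applies. Fix some $q>N$. Since $\Phi(N,k)$ grows like $k^{N-1}/(N-1)!$ near zero, the function $\Phi(N,\lambda_{1}|t|^{\frac{N}{N-1}})/|t|^{q}$ is bounded below away from zero on $|t|\geq r$, while the extra factor $|t|^{q}$ is at least a constant there. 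Hence for all $(x,t)$,
$$G(x,t)\leq \frac{\lambda_{\delta}-\tau}{N}|t|^{N}+C|t|^{q}\,\Phi\left(N,\lambda_{1}|t|^{\frac{N}{N-1}}\right).$$
We also allow $G(x,t)=0$ for $t<0$.

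Integrating against $F^{0}(x)^{-\delta}$, the first term is bounded by $\frac{\lambda_{\delta}-\tau}{N\lambda_{\delta}}\|u\|_{S}^{N}$, by the definition of $\lambda_{\delta}$. For the second term we apply H\"older with exponents $s,s'$ close to $1$, both to the weighted measure and to the power splitting of $F^{0}(x)^{-\delta}$, keeping $\delta s'<N$ on the Wulff ball as in the estimate preceding $(B_{5})$. We then use Lemma \ref{Le2.4}:
$$\int_{\mathbb{R}^{N}}\frac{|u|^{q}\Phi(N,\lambda_{1}|u|^{\frac{N}{N-1}})}{F^{0}(x)^{\delta}}dx\leq \left(\int_{\mathbb{R}^{N}}\frac{\Phi(N,s\lambda_{1}\|u\|_{S}^{\frac{N}{N-1}}|v|^{\frac{N}{N-1}})}{F^{0}(x)^{\delta}}dx\right)^{\frac1s}\left(\int_{\mathbb{R}^{N}}\frac{|u|^{qs'}}{F^{0}(x)^{\delta}}dx\right)^{\frac1{s'}},$$
where $v=u/\|u\|_{S}$. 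By $(A_{1})$, $\int(F^{N}(\nabla v)+V^{*}|v|^{N})\leq 1$. So if $\|u\|_{S}$ is small enough that $s\lambda_{1}\|u\|_{S}^{\frac{N}{N-1}}\leq(1-\frac{\delta}{N})\lambda_{N}$, inequality \eqref{(2.2)} with $\tau=V^{*}$ bounds the first factor uniformly. The second factor is at most $C\|u\|_{S}^{q}$, by the embedding $S\hookrightarrow L^{p}$ for $p\geq N$ together with the H\"older argument near the origin (as for $\lambda_{\delta}>0$). This is the main technical step: the exponents must be chosen so that both the Moser--Trudinger threshold and the integrability of the singular weight hold simultaneously. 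It works because only smallness of $\|u\|_{S}$ is needed.

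Altogether, for $\|u\|_{S}\leq\rho$ small,
$$E_{\epsilon}(u)\geq \frac{\tau}{N\lambda_{\delta}}\|u\|_{S}^{N}-C\|u\|_{S}^{q}-\epsilon\|h\|_{S^{*}}\|u\|_{S}.$$
For $\epsilon=0$, since $q>N$, we choose $l>0$ small with $\frac{\tau}{N\lambda_{\delta}}l^{N}-Cl^{q}=:\gamma>0$. For $\epsilon>0$, set
$$\phi(l)=\frac{\tau}{2N\lambda_{\delta}}l^{N-1}-\epsilon\|h\|_{S^{*}},$$
after also shrinking $l$ so that $Cl^{q}\leq\frac{\tau}{2N\lambda_{\delta}}l^{N}$. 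We take $l_{\epsilon}=\left(\frac{4N\lambda_{\delta}\epsilon\|h\|_{S^{*}}}{\tau}\right)^{\frac{1}{N-1}}$, which tends to $0$ as $\epsilon\to 0$. We then choose $\epsilon_{1}$ so that $l_{\epsilon}\leq\rho$ for $\epsilon<\epsilon_{1}$. This gives
$$E_{\epsilon}(u)\geq l_{\epsilon}\,\epsilon\|h\|_{S^{*}}=:\theta_{\epsilon}>0\quad\text{on } \|u\|_{S}=l_{\epsilon}.$$
If $h=0$, the $\epsilon=0$ case already covers it with $l_{\epsilon}$ any small number tending to $0$.
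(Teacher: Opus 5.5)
Your proposal is correct and follows essentially the same route as the paper. You split $G$ via $(B_{4})$ near $t=0$ and via $(B_{1})$ away from zero into $C|t|^{q}\Phi(N,\lambda|t|^{\frac{N}{N-1}})$ with $q>N$ (the paper takes $q=N+1$, $\lambda=\lambda_{0}$), bound the integral of that term by $C\|u\|_{S}^{q}$ through H\"older, Lemma \ref{Le2.4} and the Moser--Trudinger bound \eqref{(2.2)}, and then choose $l_{\epsilon},\theta_{\epsilon}$ as in \eqref{(3.9)}. Your version is slightly more careful than the paper's: you make explicit the restriction $\|u\|_{S}\le\rho$ that \eqref{(3.8)} silently requires, and you treat $h=0$ separately. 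Your remark that $s,s'$ are both ``close to $1$'' with $\delta s'<N$ is muddled but harmless, since any fixed $s>1$ (e.g.\ $s=2$) works once $\|u\|_{S}$ is small.
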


   \begin{proof}
       By $(B_{4})$, there exist $\nu, \omega>0$ such that if $|t|< \omega$, then
        \begin{equation}\label{(3.5)}
            G(x,t)\leq \frac{\lambda_{\delta}-\nu}{N}|t|^{N}
        \end{equation}
        for every $x\in\mathbb{R}^{N}$.
        When $|t|\geq \omega$, it follows from $(B_{1})$ that
        \begin{align}\label{(3.6)}
            G(x,t)&\leq \int_{0}^{|t|}\left( a_{1}k^{N-1}+a_{2}\Phi\left(N,\lambda_{0}|k|^{\frac{N}{N-1}}\right)\right)dk \nonumber \\
            &\leq \frac{a_{1}}{N}|t|^{N}+a_{2}\Phi\left(N,\lambda_{0}|t|^{\frac{N}{N-1}}\right)|t| \nonumber \\
            &\leq c_{\omega}|t|^{N+1}\Phi\left(N,\lambda_{0}|t|^{\frac{N}{N-1}}\right),
        \end{align}
        where $c_{\omega}=\frac{a_{1}}{\omega N\Phi\left(N,\lambda_{0}|\omega|^{\frac{N}{N-1}}\right)}+\frac{a_{2}}{\omega^{N}}$.
        Combining \eqref{(3.5)} and \eqref{(3.6)}, we obtain, for every $(x,t)\in \mathbb{R}^{N}\times \mathbb{R}$,
        \begin{equation}\label{(3.7)}
            G(x,t)\leq \frac{\lambda_{\delta}-\nu}{N}|t|^{N}+C|t|^{N+1}\Phi\left(N,\lambda_{0}|t|^{\frac{N}{N-1}}\right).
        \end{equation}
        By the H\"{o}lder inequality, Lemmas \ref{Le2.4} and \ref{Le2.2}, and the continuous embedding $S\hookrightarrow L^{q}$ for every $q\geq 1$, we have
        \begin{equation}\label{(3.8)}
            \int_{\mathbb{R}^{N}}
          \frac{|u|^{N+1}\Phi\left(N,\lambda_{0}|u|^{\frac{N}{N-1}}\right)}{F^{0}(x)^{\delta}}dx\leq C\|u\|_{S}^{N+1}.
        \end{equation}
        With the help of \eqref{(3.2)}, \eqref{(3.7)} and \eqref{(3.8)}, we obtain
        \begin{align}\label{(3.9)}
            E_{\epsilon}(u)&=\frac{1}{N}\|u\|_{S}^{N}-\int_{\mathbb{R}^{N}}\frac{G(x,u)}{F^{0}(x)^{\delta}}dx-\epsilon\int_{\mathbb{R}^{N}}hudx \nonumber\\
            &\geq \frac{1}{N}\|u\|_{S}^{N}-\frac{\lambda_{\delta}-\nu}{N}\int_{\mathbb{R}^{N}}\frac{|u|^{N}}{F^{0}(x)^{\delta}}dx-C\|u\|_{S}^{N+1}-\epsilon\int_{\mathbb{R}^{N}}hudx \nonumber\\
            &\geq \frac{\nu}{N\lambda_{\delta}}\|u\|_{S}^{N}-C\|u\|_{S}^{N+1}-\frac{\epsilon}{V^{*\frac{1}{N}}}\|h\|_{S^{*}}\|u\|_{S} \nonumber\\
            &=\|u\|_{S}\left(\frac{\nu}{N\lambda_{\delta}}\|u\|_{S}^{N-1}-C\|u\|_{S}^{N}-\frac{\epsilon}{V^{*\frac{1}{N}}}\|h\|_{S^{*}}\right).
        \end{align}
         Given $\nu>0$, for sufficiently small $l>0$, we have
        $$\frac{\nu}{N\lambda_{\delta}}l^{N-1}-Cl^{N}\geq \frac{\nu}{2N\lambda_{\delta}}l^{N-1}.$$
        For $\epsilon>0$ sufficiently small, we may choose $l_{\epsilon}$ and $\theta_{\epsilon}$ such that $$\frac{\nu}{2N\lambda_{\delta}}l_{\epsilon}^{N-1}=\frac{2\epsilon}{V^{*\frac{1}{N}}}\|h\|_{S^{*}}, \quad \theta_{\epsilon}=\frac{\epsilon}{V^{*\frac{1}{N}}} l_{\epsilon}\|h\|_{S^{*}}.$$
        Then $E_{\epsilon}(u)\geq \theta_{\epsilon}$ for every $u$ with $\|u\|_{S}=l_{\epsilon}$, and $l_{\epsilon}\to 0$ as $\epsilon\to 0$.
        If $\epsilon=0$, then by \eqref{(3.9)},
        \begin{equation*}
            E(u)\geq \|u\|_{S}\left(\frac{\nu}{N\lambda_{\delta}}\|u\|_{S}^{N-1}-C\|u\|_{S}^{N}\right).
        \end{equation*}
        Hence there exist $l>0$ and $\gamma=\frac{\nu}{2N\lambda_{\delta}}l^{N}>0$ such that $E(u)\geq\gamma$ for every $\|u\|_{S}=l$. This completes the proof.
   \end{proof}

   \begin{lemma}\label{Le3.3}
       Suppose that $\epsilon>0$, $h\not\equiv 0$ and $(B_{1})$ are satisfied. Then there exists a constant $\rho >0$ and $w\in S$ with $\|w\|_{S}=1$ such that if $0<k<\rho$, then $E_{\epsilon}(kw)<0$. Particularly, $\inf\limits_{\|u\|_{S}\leq \rho}E_{\epsilon}(u)<0$.
   \end{lemma}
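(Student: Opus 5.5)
The idea is to pick a direction $w$ along which the linear perturbation term is strictly negative. Then, for small $k$, this linear term dominates both the quadratic-type term $\frac{k^N}{N}\|w\|_S^N$ and the nonnegative term $\int G/F^0(x)^\delta$.

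\emph{Choice of direction.} Since $h\in S^{*}$ and $h\not\equiv 0$, there is $v\in S$ with $\int_{\mathbb{R}^{N}}hv\,dx\neq 0$. Replacing $v$ by $-v$ if necessary, we may assume $\int hv\,dx>0$. Set $w=v/\|v\|_S$, so that $\|w\|_S=1$ and
$$\alpha:=\int_{\mathbb{R}^{N}}hw\,dx>0 .$$
If one also wants $w\geq 0$, one can use $h\geq 0$ and take $v$ nonnegative. This is not needed for the statement, though.

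\emph{Expansion along the ray.} For $k>0$ we have
$$E_{\epsilon}(kw)=\frac{k^{N}}{N}-\int_{\mathbb{R}^{N}}\frac{G(x,kw)}{F^{0}(x)^{\delta}}dx-\epsilon k\alpha .$$
The convention $f(x,t)=0$ for $t<0$ gives $G(x,t)=0$ for $t\le 0$, and $(B_{2})$ gives $G(x,t)>0$ for $t>0$. Hence $G\geq 0$ everywhere, and the middle integral is nonnegative. It is finite, since $G(x,t)\leq a_{3}\Phi(N,\lambda_{1}|t|^{\frac{N}{N-1}})$ from $(B_{1})$ combined with Lemma \ref{Le2.2}. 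We may therefore simply drop it:
$$E_{\epsilon}(kw)\le \frac{k^{N}}{N}-\epsilon\alpha k = k\Big(\frac{k^{N-1}}{N}-\epsilon\alpha\Big).$$
This is negative as soon as $0<k<\rho:=(N\epsilon\alpha)^{\frac{1}{N-1}}$, using $N\geq 2$.

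\emph{Robustness without sign information.} If one prefers not to use the sign of $G$, since the lemma lists only $(B_{1})$, one can instead bound the integral from above. Use \eqref{(3.6)}–\eqref{(3.7)}, whose small-$t$ bound comes from $(B_1)$ directly: $|G(x,t)|\le C|t|^N+C|t|^{N+1}\Phi(N,\lambda_0|t|^{\frac N{N-1}})$. Together with \eqref{(3.8)} and the embedding estimate $\int|w|^N/F^0(x)^{\delta}\le C\|w\|_S^N$, this gives $\big|\int G(x,kw)/F^{0}(x)^{\delta}\big|\le Ck^{N}$ for $k\le 1$. Then $E_\epsilon(kw)\le Ck^N-\epsilon\alpha k<0$ for $0<k<\rho:=\min\{1,(\epsilon\alpha/C)^{1/(N-1)}\}$.

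\emph{Consequence and main obstacle.} Finally, $kw$ lies in the closed ball $\{\|u\|_S\le\rho\}$ for each such $k$, so $\inf_{\|u\|_{S}\leq\rho}E_{\epsilon}(u)\le E_\epsilon(\tfrac{\rho}{2}w)<0$. The only delicate point is the uniform smallness of the nonlinear term, namely the estimate \eqref{(3.8)} via Lemmas \ref{Le2.2} and \ref{Le2.4}, which is already available. Everything else is elementary.
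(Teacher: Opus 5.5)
Your proof is correct, but it takes a different route from the paper's, both in how $w$ is chosen and in how negativity is obtained.

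\textbf{The paper's route.} The paper takes $w$ to be the solution of the auxiliary problem $-Q_Nw+V(x)|w|^{N-2}w=h$, obtained by the direct method, so that $\int_{\mathbb{R}^N}hw\,dx=\|w\|_S^N>0$. It then argues infinitesimally. Since $f(x,0)=0$, one has $\frac{d}{dk}E_\epsilon(kw)\big|_{k=0}=-\epsilon\int hw\,dx<0$. Continuity of this derivative (that is, $E_\epsilon\in\mathcal{C}^1$) together with $E_\epsilon(0)=0$ then gives $E_\epsilon(kw)<0$ on $(0,\rho)$. This needs no sign information on $G$ and no quantitative growth estimate. On the other hand, it gives no explicit $\rho$. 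Also, the paper never normalizes $w$, so $\|w\|_S=1$ holds only after a harmless rescaling.

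\textbf{Your route.} You take any $v$ with positive pairing against $h$ and normalize it. This avoids solving a PDE and actually delivers $\|w\|_S=1$. Your bound $E_\epsilon(kw)\le k^N/N-\epsilon\alpha k$ is a one-line argument with the explicit radius $\rho=(N\epsilon\alpha)^{\frac{1}{N-1}}$. The price is the use of $G\ge 0$, which comes from the convention $f(x,t)=0$ for $t<0$ together with $(B_2)$ (or from working with $f^*$), not from $(B_1)$ alone.

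\textbf{A correction to your $(B_1)$-only variant.} The estimate \eqref{(3.8)} is uniform only while $\|u\|_S$ stays below the Moser--Trudinger threshold, so you cannot cite it for all $k\le 1$. You can fix this in either of two ways.
\begin{enumerate}
\item Shrink $\rho$ so that the uniform bound \eqref{(2.2)} applies to $kw$ after H\"older's inequality.
\item Since $w$ is fixed, use $\Phi(N,\mu s)\le\mu^{N-1}\Phi(N,s)$ for $0\le\mu\le 1$. With $\mu=k^{\frac{N}{N-1}}$, this gives $\int_{\mathbb{R}^N}\frac{|G(x,kw)|}{F^0(x)^\delta}dx\le C_w k^N$ for $k\le 1$ directly, using only \eqref{(2.1)} and Lemma \ref{Le2.4}.
\end{enumerate}
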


   \begin{proof}
       Let $w\in S$ be a solution of
        \begin{equation*}
            -Q_{N}w+V(x)|w|^{N-2}w=h \quad \text{in} \ \mathbb{R}^{N}.
        \end{equation*}
         The existence of $w$ follows from the direct method in the calculus of variations. Since $h\not\equiv 0$, we have
        \begin{equation*}
            \int_{\mathbb{R}^{N}}(F^{N-1}(\nabla w)F_{\xi}(\nabla w)\nabla w+V(x)|w|^{N})dx=\int_{\mathbb{R}^{N}}hwdx,
        \end{equation*}
        hence
        \begin{equation*}
          \int_{\mathbb{R}^{N}}hwdx=\|w\|_{S}^{N}>0.
        \end{equation*}
        For $k>0$, it follows that
        \begin{equation*}
            \frac{\mathrm{d} }{\mathrm{d} k}E_{\epsilon}(kw)=k^{N-1}\|w\|_{S}^{N}-\int_{\mathbb{R}^{N}}\frac{f(x,kw)w}{F^{0}(x)^{\delta}}dx-\epsilon\int_{\mathbb{R}^{N}}hwdx.
        \end{equation*}
        In view of $f(x,0)=0$, we have $\frac{\mathrm{d} }{\mathrm{d} k}E_{\epsilon}(kw)\big|_{k=0}<0$.
        By continuity, there exists $\rho>0$ such that, for every $k$ with $0<k<\rho$,
        \begin{equation*}
             \frac{\mathrm{d} }{\mathrm{d} k}E_{\epsilon}(kw)<0.
        \end{equation*}
        Since $E_{\epsilon}(0)=0$, we obtain $E_{\epsilon}(kw)<0$ for every $0<k<\rho$. The proof of Lemma \ref{Le3.3} is completed.
   \end{proof}

   In order to obtain more precise information on the min-max values of the functionals $E$ and $E_{\epsilon}$, we consider the following sequence of test functions introduced by Moser \cite{16}. Set
   \begin{equation*}
         \tilde{v_{j}}(x,l)=\frac{1}{(N\kappa _{N})^{\frac{1}{N}}}
         \left\{
         \begin{aligned}
            &(\log j)^{1-\frac{1}{N}}, &&0<F^{0}(x)\leq\frac{l}{j},\\
            &\frac{\log\frac{l}{F^{0}(x)}}{(\log j)^{\frac{1}{N}}}, &&\frac{l}{j}<F^{0}(x)\leq l,\\
            &0, &&F^{0}(x)>l.
        \end{aligned}
        \right.
     \end{equation*}
     Let $v_{j}(x,l)=\frac{\tilde{v_{j}}(x,l)}{\|\tilde{v_{j}}\|_{S}}$. Then it is clear that $v_{j}\in S$, $\mathrm{supp}\,v_{j}\subset\mathcal{W}_{l}$ and $\|v_{j}\|_{S}=1$.

     With the similar way to prove Lemma 3.2 in \cite{23}, we can obtain the following result. Here we omit the details.
    \begin{lemma}\label{Le3.4}
         Suppose that $V$ is continuous and satisfies $(A_{1})$. Then
         \begin{equation}\label{(3.10)}
             \|\tilde{v_{j}}\|_{S}^{N}\leq 1+\frac{g(l)}{\log j}\left(\frac{(N-1)!}{N^{N}}l^{N}+o_{j}(1)\right),
         \end{equation}
         where $g(l)=\sup\limits_{F^{0}(x)\leq l}V(x)$ and $o_{j}(1)\to 0$ as $j\to \infty$.
     \end{lemma}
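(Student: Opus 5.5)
We compute the two pieces of $\|\tilde v_j\|_S^N$ separately: the gradient integral $\int_{\mathbb{R}^N}F^N(\nabla\tilde v_j)\,dx$ and the potential integral $\int_{\mathbb{R}^N}V(x)|\tilde v_j|^N\,dx$. Everything reduces to one-dimensional integrals via the anisotropic co-area formula for Wulff balls. The key facts are that $|\{F^0(x)\le r\}|=\kappa_N r^N$, so the $(N-1)$-dimensional anisotropic perimeter of $\partial\mathcal W_r$ is $N\kappa_N r^{N-1}$, and that $F(\nabla F^0(x))=1$ by property (d).

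\textbf{Gradient term.} On $\{0<F^0\le l/j\}$ and on $\{F^0>l\}$ the function $\tilde v_j$ is constant, so the gradient vanishes there. On the annulus $l/j<F^0(x)\le l$ we have
\[
\nabla\tilde v_j=-(N\kappa_N)^{-1/N}(\log j)^{-1/N}\,\frac{\nabla F^0(x)}{F^0(x)}.
\]
By homogeneity and (d), this gives
\[
F^N(\nabla\tilde v_j)=\frac{1}{N\kappa_N\log j\,F^0(x)^N}.
\]
Integrating in the radial variable $r=F^0$ with $d|\mathcal W_r|=N\kappa_N r^{N-1}dr$, we get
\[
\int_{\mathbb{R}^N}F^N(\nabla\tilde v_j)\,dx=\frac{1}{\log j}\int_{l/j}^{l}\frac{dr}{r}=1 .
\]

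\textbf{Potential term.} Since $\operatorname{supp}\tilde v_j\subset\mathcal W_l$, we bound $V\le g(l)$ there. Hence it suffices to show
\[
\int_{\mathcal W_l}|\tilde v_j|^N\,dx\le\frac{1}{\log j}\Big(\frac{(N-1)!}{N^N}l^N+o_j(1)\Big).
\]
On the inner ball $F^0\le l/j$ the contribution is
\[
\frac{(\log j)^{N-1}}{N\kappa_N}\,\kappa_N\frac{l^N}{j^N},
\]
which is $o_j(1)/\log j$ because $(\log j)^N j^{-N}\to0$. On the annulus the contribution is
\[
\frac{1}{N\kappa_N\log j}\int_{l/j}^{l}\Big(\log\frac{l}{r}\Big)^N N\kappa_N r^{N-1}\,dr .
\]
Substituting $r=le^{-s}$ bounds this by
\[
\frac{l^N}{\log j}\int_0^\infty s^Ne^{-Ns}\,ds=\frac{l^N}{\log j}\cdot\frac{N!}{N^{N+1}}=\frac{l^N}{\log j}\cdot\frac{(N-1)!}{N^N}.
\]

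Adding the two terms yields \eqref{(3.10)}.

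The main point to justify is the co-area step: the identity $\int_{\{a<F^0\le b\}}\phi(F^0(x))\,dx=\int_a^b\phi(r)\,N\kappa_N r^{N-1}\,dr$. This follows from $|\mathcal W_r|=\kappa_N r^N$ by differentiation (layer-cake), so no anisotropic co-area machinery is needed. The remaining computations are routine.
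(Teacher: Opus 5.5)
Your proof is correct and takes essentially the paper's approach. The paper omits the details and defers to the direct Moser-function computation of Lemma 3.2 in \cite{23}, which is exactly your argument: the gradient integral equals $1$ by homogeneity and $F(\nabla F^{0})=1$, and bounding $V\le g(l)$ on $\mathcal{W}_{l}$ with the substitution $r=le^{-s}$ gives $\frac{1}{\log j}\bigl(\frac{(N-1)!}{N^{N}}l^{N}+o_{j}(1)\bigr)$, where the pushforward identity $d|\mathcal{W}_{r}|=N\kappa_{N}r^{N-1}\,dr$ justifies the radial reduction.
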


     \begin{lemma}\label{Le3.5}
         Suppose that $(A_{1})$, $(B_{1})$, $(B_{2})$, $(B_{3})$ and $(B_{5})$ hold. Then there exists $j\in\mathbb{N}$ such that
        \begin{equation}\label{(3.11)}
            \max\limits_{k\geq 0}E(kv_{j})<\frac{1}{N}\left(\frac{N-\delta}{N}\frac{\lambda_{N}}{\lambda_{0}}\right)^{N-1}.
        \end{equation}
        Moreover, for this $j$, there exist $\epsilon^{0}>0$ and $\gamma^{0}>0$ such that, if $0\leq\epsilon<\epsilon^{0}$, then
        \begin{equation}\label{(3.12)}
            \max\limits_{k\geq 0}E_{\epsilon}(kv_{j})<\frac{1}{N}\left(\frac{N-\delta}{N}\frac{\lambda_{N}}{\lambda_{0}}\right)^{N-1}-\gamma^{0}.
        \end{equation}
     \end{lemma}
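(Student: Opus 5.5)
We argue by contradiction, following the scheme of Lemma 3.3 in \cite{23}. Fix $l>0$ at which the infimum $\mathcal{B}$ in $(B_{5})$ is attained, so that $\beta_{0}>\frac{(N-\delta)^{N}}{\lambda_{0}^{N-1}l^{N-\delta}}e^{(N-\delta)g(l)\frac{(N-2)!}{N^{N}}l^{N}}$. Write $v_{j}=v_{j}(\cdot,l)$ and set $A:=\left(\frac{N-\delta}{N}\frac{\lambda_{N}}{\lambda_{0}}\right)^{N-1}$. Suppose \eqref{(3.11)} fails for every $j$. By Lemma \ref{Le3.1} applied with $\epsilon=0$, $E(kv_{j})\to-\infty$, and $E(kv_j)>0$ for small $k$ by Lemma \ref{Le3.2}. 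Hence the maximum is attained at some $k_{j}>0$, and
\[
\frac{k_{j}^{N}}{N}\geq E(k_jv_j)+\int\frac{G}{F^{0}(x)^\delta}\geq \frac{A}{N}.
\]
Together with $(B_{2})$ this gives $k_{j}^{N}\geq A$. The critical point condition $\frac{d}{dk}E(kv_{j})|_{k_{j}}=0$ reads
\[
k_{j}^{N}=\int_{\mathcal{W}_{l}}\frac{k_{j}v_{j}f(x,k_{j}v_{j})}{F^{0}(x)^{\delta}}\,dx .
\]

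The next step is to show that $(k_j)$ is bounded. On the small Wulff ball $\mathcal{W}_{l/j}$ we have $k_{j}v_{j}=k_{j}(\log j)^{(N-1)/N}(N\kappa_{N})^{-1/N}\|\tilde v_{j}\|_{S}^{-1}\to\infty$. By $(B_{5})$, for every $\varepsilon>0$ we get $tf(x,t)\geq(\beta_{0}-\varepsilon)e^{\lambda_{0}t^{N/(N-1)}}$ for large $t$. We also use $\int_{\mathcal{W}_{l/j}}F^{0}(x)^{-\delta}dx=\frac{N\kappa_{N}}{N-\delta}(l/j)^{N-\delta}$. Restricting the integral to $\mathcal{W}_{l/j}$ then gives
\[
k_{j}^{N}\geq(\beta_{0}-\varepsilon)\frac{N\kappa_{N}l^{N-\delta}}{N-\delta}\exp\Big(\lambda_{0}k_{j}^{\frac{N}{N-1}}\frac{\log j}{(N\kappa_{N})^{\frac{1}{N-1}}\|\tilde v_{j}\|_{S}^{\frac{N}{N-1}}}-(N-\delta)\log j\Big).
\]
This forces $k_{j}$ to be bounded. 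It also gives
\[
\limsup_{j\to\infty}\Big(\lambda_{0}k_{j}^{N/(N-1)}(N\kappa_N)^{-1/(N-1)}\|\tilde v_j\|_S^{-N/(N-1)}-(N-\delta)\Big)\log j<\infty,
\]
and combined with Lemma \ref{Le3.4} this yields $k_{j}^{N}\to A$. Note that $(N\kappa_{N})^{1/(N-1)}(N-\delta)/\lambda_{0}=A^{1/(N-1)}$ by the definition of $\lambda_{N}$.

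Now refine the estimate using $k_{j}^{N}\geq A$ and Lemma \ref{Le3.4}. We have $\|\tilde v_j\|_S^{-N/(N-1)}\geq 1-\frac{1}{N-1}\frac{g(l)}{\log j}\big(\frac{(N-1)!}{N^N}l^N+o(1)\big)$. The exponent above is therefore at least
\[
-(N-\delta)g(l)\frac{(N-2)!}{N^{N}}l^{N}+o(1).
\]
Letting $j\to\infty$ and then $\varepsilon\to 0$ gives
\[
A^{N/(N-1)}\cdot\frac{1}{A}\cdot\frac{(N-\delta)A^{\cdots}}{\cdots}\ \text{(bookkeeping)}\ \Longrightarrow\ \beta_{0}\leq \frac{(N-\delta)^{N}}{\lambda_{0}^{N-1}l^{N-\delta}}e^{(N-\delta)g(l)\frac{(N-2)!}{N^{N}}l^{N}}=\mathcal{B}.
\]
Using $A\cdot\frac{N-\delta}{N\kappa_{N}}=\frac{(N-\delta)^{N}}{\lambda_{0}^{N-1}}$, this contradicts $(B_5)$. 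The main obstacle is exactly this bookkeeping: matching the constant $\frac{(N-1)!}{N^N}$ of Lemma \ref{Le3.4} with the factor $\frac{N}{N-1}$ from the expansion of the power of $\|\tilde v_j\|_S$, so as to produce the $(N-2)!$ in $\mathcal{B}$. One must also handle the lower-order part of the integral outside $\mathcal{W}_{l/j}$, which is simply dropped since the integrand is nonnegative by $(B_{2})$. This proves \eqref{(3.11)} for some $j$.

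For \eqref{(3.12)}, fix this $j$. Write $\max_{k}E(kv_j)=\frac{A}{N}-3\gamma^{0}$ with $\gamma^0>0$. Since $E_{\epsilon}(kv_{j})=E(kv_{j})-\epsilon k\int hv_{j}$, we need uniform control in $k$. Using $G(x,t)\geq b_{1}t^{\eta}-b_{2}$ on the support, as in Lemma \ref{Le3.1}, we get $E_{\epsilon}(kv_{j})\leq E(kv_j)+\epsilon k\|h\|_{S^*}\leq \frac{k^N}{N}-b_1k^\eta c+b_2'+\epsilon k\|h\|_{S^*}$. Hence for $\epsilon\leq1$ there is $K$ with $E_{\epsilon}(kv_j)<0$ for all $k>K$. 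For $k\in[0,K]$ we have $E_{\epsilon}(kv_{j})\leq \frac{A}{N}-3\gamma^{0}+\epsilon K\|h\|_{S^{*}}$. Choosing $\epsilon^{0}=\min\{1,\gamma^{0}/(K\|h\|_{S^*}+1)\}$ gives \eqref{(3.12)}.
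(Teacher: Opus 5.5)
Your argument is correct. Up to $k_j^N\to A$ (with $A=\left(\frac{N-\delta}{N}\frac{\lambda_{N}}{\lambda_{0}}\right)^{N-1}$ in your notation), it is the paper's proof: the same contradiction hypothesis, the maximizer $k_j$, the bound $k_j^N\ge A$ from $G\ge 0$, the Euler--Lagrange identity \eqref{(3.16)}, and the inner-ball estimate \eqref{(3.19)} on $\mathcal{W}_{l/j}$.

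The routes part only at the final contradiction. The paper integrates over all of $\mathcal{W}_l$, splits it into $A_j=\{k_jv_j\ge R_\sigma\}$ and $B_j$, bounds the annulus integral from below, and passes to the limit on $B_j$ by dominated convergence (\eqref{(3.21)}--\eqref{(3.25)}). You instead feed $\lambda_0k_j^{N/(N-1)}\ge (N-\delta)(N\kappa_N)^{1/(N-1)}$ and the Bernoulli bound on $\|\tilde{v_{j}}\|_S^{-N/(N-1)}$ back into the inner-ball estimate. Both give exactly the same inequality $A\ge(\beta_0-\sigma)\frac{N\kappa_N l^{N-\delta}}{N-\delta}e^{-(N-\delta)g(l)\frac{(N-2)!}{N^N}l^N}$. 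The reason is that the paper discards the factor $e^{(N-\delta)t^{N/(N-1)}}\ge 1$ on the annulus. The annulus then contributes only the volume term $\frac{N\kappa_N l^{N-\delta}}{N-\delta}$, which cancels exactly against the limit \eqref{(3.24)} of the $B_j$ term, while \eqref{(3.25)} vanishes.

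So your shortcut costs nothing. Indeed, \eqref{(3.19)} together with \eqref{(3.15)} and \eqref{(3.20)} already yields the contradiction in the paper as well. The $A_j/B_j$ machinery would only pay off if the annulus integral were estimated sharply, keeping the contribution near $F^0(x)=l/j$; that would lower the threshold $\mathcal{B}$, which neither proof attempts.

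For \eqref{(3.12)}, your argument is more complete than the paper's one-line remark. You take a range $[0,K]$, uniform in $\epsilon\le 1$, obtained from $G(x,t)\ge b_1t^\eta-b_2$ on the support, and then require $\epsilon K\|h\|_{S^*}<\gamma^0$. This needs no sign condition on $h$; if $h\ge 0$ it would be immediate from $E_\epsilon(kv_j)\le E(kv_j)$.

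Two small corrections:
\begin{enumerate}
\item Appealing to Lemma \ref{Le3.2} for $E(kv_j)>0$ near $k=0$ uses $(B_4)$, which is not a hypothesis here. It is also unnecessary: under the contradiction hypothesis, $\max_{k\ge0}E(kv_j)\ge A/N>0=E(0)$ already forces $k_j>0$.
\item The placeholder ``bookkeeping'' display should simply be the rearrangement of $A\ge(\beta_0-\varepsilon)\frac{N\kappa_N l^{N-\delta}}{N-\delta}e^{-(N-\delta)g(l)\frac{(N-2)!}{N^N}l^N}$ via $A\frac{N-\delta}{N\kappa_N}=\frac{(N-\delta)^N}{\lambda_0^{N-1}}$. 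Also, the factor from expanding the power of $\|\tilde{v_{j}}\|_S$ is $\frac{1}{N-1}$, not $\frac{N}{N-1}$, as you correctly wrote one line earlier.
\end{enumerate}
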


     \begin{proof}
         By $(B_{5})$, there exists $l>0$ such that
         \begin{equation}\label{(3.13)}
             \beta_{0}>\frac{(N-\delta)^{N}}{\lambda_{0}^{N-1}l^{N-\delta}}e^{(N-\delta)g(l)\frac{(N-2)!}{N^{N}}l^{N}}.
         \end{equation}
      First, we prove \eqref{(3.11)} by contradiction. Assume that, for every $j\in\mathbb{N}$, we have
         \begin{equation}\label{(3.14)}
            \max\limits_{k\geq 0}E(kv_{j})\geq\frac{1}{N}\left(\frac{N-\delta}{N}\frac{\lambda_{N}}{\lambda_{0}}\right)^{N-1}.
         \end{equation}
         By Lemma \ref{Le3.1}, for every $j\in\mathbb{N}$, there exists $k_{j}>0$ such that $$E(k_{j}v_{j})=\max\limits_{k\geq 0}E(kv_{j}).$$
         Then by \eqref{(3.14)}, we obain
         \begin{align*}
            E(k_{j}v_{j})
            &=\frac{1}{N}\|k_{j}v_{j}\|_{S}^{N}-\int_{\mathbb{R}^{N}}\frac{G(x,k_{j}v_{j})}{F^{0}(x)^{\delta}}dx\\
            &=\frac{k_{j}^{N}}{N}-\int_{\mathbb{R}^{N}}\frac{G(x,k_{j}v_{j})}{F^{0}(x)^{\delta}}dx\\
            &\geq\frac{1}{N}\left(\frac{N-\delta}{N}\frac{\lambda_{N}}{\lambda_{0}}\right)^{N-1}.
         \end{align*}
         Since $G(x,\cdot)\geq 0$, we have
         \begin{equation}\label{(3.15)}
             k_{j}^{N}\geq \left(\frac{N-\delta}{N}\frac{\lambda_{N}}{\lambda_{0}}\right)^{N-1}.
         \end{equation}
         When $k=k_{j}$, we have
         $$\frac{\mathrm{d} }{\mathrm{d} k}\left(\frac{k^{N}}{N}-\int_{\mathbb{R}^{N}}\frac{G(x,kv_{j})}{F^{0}(x)^{\delta}}dx\right)=0.$$
         Then
         \begin{equation}\label{(3.16)}
          k_{j}^{N}=\int_{\mathbb{R}^{N}}\frac{k_{j}v_{j}
           f(x,k_{j}v_{j})}{F^{0}(x)^{\delta}}dx=\int_{\mathcal{W}_{l}}\frac{k_{j}v_{j}
           f(x,k_{j}v_{j})}{F^{0}(x)^{\delta}}dx.
         \end{equation}
          From $(B_{5})$, for every $\sigma>0$, there exists $R_{\sigma}>0$ such that for every $x\in\mathbb{R}^{N}$ and $u\geq R_{\sigma}$, it follows that
         \begin{equation}\label{(3.17)}
             uf(x,u)\geq (\beta_{0}-\sigma)e^{\lambda_{0}|u|^{\frac{N}{N-1}}}.
         \end{equation}
         When $0<F^{0}(x)\leq \frac{l}{j}$, by Lemma \ref{Le3.4}, we obtain
         \begin{align}\label{(3.18)}
             v_{j}^{\frac{N}{N-1}}(x,l)
             &\geq (N\kappa_{N})^{-\frac{1}{N-1}} \frac{\log j}{1+\frac{g(l)}{(N-1)\log j}\left(\frac{(N-1)!}{N^{N}}l^{N}+o_{j}(1)\right)} \nonumber \\
             &=(N\kappa_{N})^{-\frac{1}{N-1}}\log j
             -(N\kappa_{N})^{-\frac{1}{N-1}}g(l)\frac{(N-2)!}{N^{N}}l^{N}+o_{j}(1).
         \end{align}
         With the help of \eqref{(3.16)}, \eqref{(3.17)} and \eqref{(3.18)}, we have
         \begin{align}\label{(3.19)}
             k_{j}^{N}
             &\geq (\beta_{0}-\sigma)\int_{\mathcal{W}_{\frac{l}{j}}}\frac{e^{\lambda_{0}|k_{j}v_{j}|^{\frac{N}{N-1}}}}{F^{0}(x)^{\delta}}dx \nonumber \\
             &\geq (\beta_{0}-\sigma)\int_{\mathcal{W}_{\frac{l}{j}}}\frac{e^{\lambda_{0}(N\kappa_{N})^{-\frac{1}{N-1}}k_{j}^{\frac{N}{N-1}}(\log j-g(l)\frac{(N-2)!}{N^{N}}l^{N}+o_{j}(1))}}{F^{0}(x)^{\delta}}dx \nonumber \\
             &=(\beta_{0}-\sigma)\frac{N\kappa_{N}}{N-\delta}(\frac{l}{j})^{N-\delta}e^{\lambda_{0}(N\kappa_{N})^{-\frac{1}{N-1}}k_{j}^{\frac{N}{N-1}}(\log j-g(l)\frac{(N-2)!}{N^{N}}l^{N}+o_{j}(1))},
         \end{align}
         which shows in particular that the sequence $(k_{j})$ is bounded below. A direct computation then yields
      \begin{equation}\label{(3.20)}
      \lim\limits_{j\to\infty}k_{j}^{N}
       =\left(\frac{N-\delta}{N}\frac{\lambda_{N}}{\lambda_{0}}\right)^{N-1}.
      \end{equation}

     Set
     \begin{equation*}
         A_{j}=\{x\in\mathcal{W}_{l}:k_{j}v_{j}\geq R_{\sigma}\} \quad \mathrm{and} \quad B_{j}=\mathcal{W}_{l}\backslash A_{j}.
     \end{equation*}
     By \eqref{(3.16)} and \eqref{(3.17)}, we have
     \begin{equation}\label{(3.21)}
    \begin{aligned}
        k_{j}^{N}
        &\geq (\beta_{0}-\sigma)\int_{\mathcal{W}_{l}}\frac{e^{\lambda_{0}|k_{j}v_{j}|^{\frac{N}{N-1}}}}{F^{0}(x)^{\delta}}dx-(\beta_{0}-\sigma)\int_{B_{j}}\frac{e^{\lambda_{0}|k_{j}v_{j}|^{\frac{N}{N-1}}}}{F^{0}(x)^{\delta}}dx \\
        &\quad+\int_{B_{j}}\frac{k_{j}v_{j}f(x,k_{j}v_{j})}{F^{0}(x)^{\delta}}dx.
    \end{aligned}
    \end{equation}
    Using \eqref{(3.15)}, we have
    \begin{align}\label{(3.22)}
     \int_{\mathcal{W}_{l}}\frac{e^{\lambda_{0}|k_{j}v_{j}|^{\frac{N}{N-1}}}}{F^{0}(x)^{\delta}}
     dx
     &\geq \int_{\mathcal{W}_{l}}\frac{e^{\lambda_{N}(1-\frac{\delta}{N})v_{j}^{\frac{N}{N-1}}}}{F^{0}(x)^{\delta}}dx \nonumber \\
     &=\int_{\mathcal{W}_{\frac{l}{j}}}
     \frac{e^{\lambda_{N}(1-\frac{\delta}{N})v_{j}^{\frac{N}{N-1}}}}{F^{0}(x)^{\delta}}dx
     +\int_{\mathcal{W}_{l}\backslash \mathcal{W}_{\frac{l}{j}}}
     \frac{e^{\lambda_{N}(1-\frac{\delta}{N})v_{j}^{\frac{N}{N-1}}}}{F^{0}(x)^{\delta}}dx.
    \end{align}
    By \eqref{(3.18)}, we obtian
    \begin{align*}
        \int_{\mathcal{W}_{\frac{l}{j}}}\frac{e^{\lambda_{N}(1-\frac{\delta}{N})v_{j}^{\frac{N}{N-1}}}}{F^{0}(x)^{\delta}}
        dx
        &\geq e^{\lambda_{N}(1-\frac{\delta}{N})\left((N\kappa_{N})^{-\frac{1}{N-1}}\log j-(N\kappa_{N})^{-\frac{1}{N-1}}g(l)\frac{(N-2)!}{N^{N}}l^{N}+o_{j}(1)\right)}\int_{\mathcal{W}_{\frac{l}{j}}}\frac{1}{F^{0}(x)^{\delta}} dx\\
        &=\frac{N\kappa_{N}l^{N-\delta}}{N-\delta}e^{-(N-\delta)g(l)\frac{(N-2)!}{N^{N}}l^{N}+o_{j}(1)}.
    \end{align*}
    Moreover, by the change of variables
    $t=(\log j)^{-\frac{1}{N}}\|\tilde{v_{j}}\|_{S}^{-1}\log \frac{l}{k}$, we obtain
    \begin{align*}
        &\int_{\mathcal{W}_{l}\backslash \mathcal{W}_{\frac{l}{j}}}\frac{e^{\lambda_{N}(1-\frac{\delta}{N})v_{j}^{\frac{N}{N-1}}}}{F^{0}(x)^{\delta}}
     dx\\
    &=N\kappa_{N}\int_{\frac{l}{j}}^{l}k^{N-\delta-1}e^{(N-\delta)\left((\log j)^{-\frac{1}{N}}\|\tilde{v_{j}}\|_{S}^{-1}\log \frac{l}{k}\right)^{\frac{N}{N-1}}}dk\\
     &=N\kappa_{N}l^{N-\delta}\int_{0}^{(\log j)^{1-\frac{1}{N}}\|\tilde{v_{j}}\|_{S}^{-1}}(\log j)^{\frac{1}{N}}\|\tilde{v_{j}}\|_{S}e^{(N-\delta)t^{\frac{N}{N-1}}-(N-\delta)\|\tilde{v_{j}}\|_{S}(\log j)^{\frac{1}{N}}t}dt\\
     &\geq N\kappa_{N}l^{N-\delta}\int_{0}^{(\log j)^{1-\frac{1}{N}}\|\tilde{v_{j}}\|_{S}^{-1}}(\log j)^{\frac{1}{N}}\|\tilde{v_{j}}\|_{S}e^{-(N-\delta)\|\tilde{v_{j}}\|_{S}(\log j)^{\frac{1}{N}}t}dt\\
     &=\frac{N\kappa_{N}l^{N-\delta}}{N-\delta}\left(1-e^{-(N-\delta)\log j}\right).
    \end{align*}
    Letting $j\to\infty$ in \eqref{(3.22)}, we obtain
    \begin{equation}\label{(3.23)}
     \liminf\limits_{j\to\infty}
    \int_{\mathcal{W}_{l}}
     \frac{e^{\lambda_{0}|k_{j}v_{j}|^{\frac{N}{N-1}}}}{F^{0}(x)^{\delta}}dx
     \geq \frac{N\kappa_{N}l^{N-\delta}}{N-\delta}\left(1+e^{-(N-\delta)g(l)\frac{(N-2)!}{N^{N}}l^{N}}\right).
    \end{equation}

    Since $v_{j}\to 0$ a.e. in $\mathbb{R}^{N}$, by Lebesgue's dominated convergence theorem, it follows that
    \begin{equation}\label{(3.24)}
        \lim\limits_{j\to\infty}\int_{B_{j}}\frac{e^{\lambda_{0}|k_{j}v_{j}|^{\frac{N}{N-1}}}}{F^{0}(x)^{\delta}}dx=\int_{\mathcal{W}_{l}}\frac{1}{F^{0}(x)^{\delta}}dx=\frac{N\kappa_{N}l^{N-\delta}}{N-\delta},
    \end{equation}
    \begin{equation}\label{(3.25)}
        \lim\limits_{j\to\infty}\int_{B_{j}}\frac{k_{j}v_{j}f(x,k_{j}v_{j})}{F^{0}(x)^{\delta}}dx=0.
    \end{equation}
    Combining \eqref{(3.20)}--\eqref{(3.25)}, we obtain
    $$\left(\frac{N-\delta}{N}\frac{\lambda_{N}}{\lambda_{0}}\right)^{N-1}\geq (\beta_{0}-\sigma)\frac{N\kappa_{N}l^{N-\delta}}{N-\delta}e^{-(N-\delta)g(l)\frac{(N-2)!}{N^{N}}l^{N}}.$$
    By the definition of $\lambda_{N}$ and the arbitrariness of $\sigma>0$, we have
    $$\beta_{0}\leq \frac{(N-\delta)^{N}}{\lambda_{0}^{N-1}l^{N-\delta}}e^{(N-\delta)g(l)\frac{(N-2)!}{N^{N}}l^{N}},$$
    which contradicts \eqref{(3.13)}. Therefore, \eqref{(3.11)} is proved.

  In addition, \eqref{(3.12)} follows from \eqref{(3.11)} and the definition of $E_{\epsilon}$. Then we complete the proof of Lemma \ref{Le3.5}.
     \end{proof}

   Next, we research some properties of Palais-Smale sequences of the functionals $E$ and $E_{\epsilon}$.

     \begin{lemma}\label{Le3.6}
        Suppose that $(A_{1})$, $(A_{2})$, $(B_{1})$, $(B_{2})$ and $(B_{3})$ hold. Let $(u_{j})\subset S$ be a Palais-Smale sequence of $E_{\epsilon}$, i.e.
        \begin{equation}\label{(3.26)}
            E_{\epsilon}(u_{j})\to c, \quad E_{\epsilon}^{'}(u_{j})\to 0 \quad \text{in}\ S^{*}\ \text{as}\ j\to \infty,
        \end{equation}
        where $S^{*}$ is the dual space of $S$. Then there exist a subsequence, still denoted by $(u_{j})$, and a function $u\in S$ such that $u_{j}\rightharpoonup u$ weakly in $S$, $u_{j}\to u$ strongly in $L^{q}(\mathbb{R}^{N})$ for every $q\geq 1$, and
        \begin{equation*}
            \left\{
            \begin{aligned}
                &\frac{f(x,u_{j})}{F^{0}(x)^{\delta}}\to \frac{f(x,u)}{F^{0}(x)^{\delta}} &&\text{strongly in}\ L^{1}(\mathbb{R}^{N}),\\
                &\frac{G(x,u_{j})}{F^{0}(x)^{\delta}}\to \displaystyle\frac{G(x,u)}{F^{0}(x)^{\delta}}
                &&\text{strongly in}\ L^{1}(\mathbb{R}^{N}),\\
                &\nabla u_{j}(x)\to \nabla u(x)  &&a.e.\ \text{in}\ \mathbb{R}^{N},\\
                &F^{N-1}(\nabla u_{j})F_{\xi}(\nabla u_{j})\rightharpoonup F^{N-1}(\nabla u)F_{\xi}(\nabla u)
                &&\text{weakly in}\ (L^{\frac{N}{N-1}}(\mathbb{R}^{N}))^{N}.
            \end{aligned}
            \right.
        \end{equation*}
        Moreover, $u$ is a weak solution of \eqref{(1.6)}; the case $\epsilon=0$ is analogous.
    \end{lemma}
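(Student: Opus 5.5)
We follow the standard scheme for Palais--Smale sequences with exponential nonlinearities (as in \cite{23,24}), adapted to the anisotropic operator.

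\textbf{Step 1: boundedness.} Combine \eqref{(3.26)} with $(B_{2})$:
\begin{align*}
c+o(1)+o(1)\|u_{j}\|_{S}\geq E_{\epsilon}(u_{j})-\frac{1}{\eta}\langle E_{\epsilon}'(u_{j}),u_{j}\rangle
\geq\Big(\frac{1}{N}-\frac{1}{\eta}\Big)\|u_{j}\|_{S}^{N}-C\epsilon\|h\|_{S^{*}}\|u_{j}\|_{S}.
\end{align*}
Since $\eta>N$, $(u_{j})$ is bounded in $S$. As a byproduct,
\[
\int_{\mathbb{R}^{N}}\frac{f(x,u_{j})u_{j}}{F^{0}(x)^{\delta}}dx\leq C,
\qquad
\int_{\mathbb{R}^{N}}\frac{G(x,u_{j})}{F^{0}(x)^{\delta}}dx\leq C.
\]
By reflexivity and Lemma \ref{Le2.5}, after passing to a subsequence we get $u_{j}\rightharpoonup u$ in $S$, $u_{j}\to u$ in every $L^{q}$, and $u_{j}\to u$ a.e.

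\textbf{Step 2: $L^{1}$ convergence of the nonlinear terms.} We use the de Figueiredo--Miyagaki--Ruf lemma: if $g_{j}=f(x,u_{j})/F^{0}(x)^{\delta}\to g=f(x,u)/F^{0}(x)^{\delta}$ a.e. and $\int |g_{j}u_{j}|\leq C$, then $g_{j}\to g$ in $L^{1}_{loc}$. Its proof splits the integral over $\{|u_{j}|\geq M\}$, where the contribution is bounded by $C/M$, and over $\{|u_{j}|<M\}$, where dominated convergence applies. The singular weight is locally integrable because $\delta<N$. The passage from $L^{1}_{loc}$ to $L^{1}(\mathbb{R}^{N})$ is the delicate part. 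Outside a large Wulff ball we have $F^{0}(x)^{-\delta}\leq 1$. We split again by $\{|u_{j}|<M\}$, where $(B_{1})$ gives $|f(x,u_{j})|\leq C_{M}|u_{j}|^{N-1}$, and $\{|u_{j}|\geq M\}$, where the bound is $C/M$. Because $u_{j}\to u$ strongly in $L^{N-1}$ (or in $L^{1}$ when $N=2$, possibly after the cruder bound $|f|\le C_M|u_j|$ for $|u_j|\le M$ and $|u_j|\ge$ small), the tails are uniformly small, and by Vitali we obtain $L^{1}(\mathbb{R}^{N})$ convergence.

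For $G$, use $(B_{3})$ for $t\geq T_{0}$, namely $G\leq R_{0}f$, together with $G\leq C|t|^{N}$ near $0$ (from $(B_{1})$). This gives domination by $Cf(x,u_{j})+C|u_{j}|^{N}$ locally, and the generalized dominated convergence theorem yields $G(x,u_{j})/F^{0}(x)^{\delta}\to G(x,u)/F^{0}(x)^{\delta}$ in $L^{1}$.

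\textbf{Step 3: a.e. convergence of gradients.} This is the main obstacle, since exponential growth prevents testing directly with $u_{j}-u$. We follow the truncation argument. Fix $\phi\in C_{c}^{\infty}$ with $0\leq\phi\leq1$, and test $E_{\epsilon}'(u_{j})$ with $\phi\,T_{\tau}(u_{j}-u)$, where $T_{\tau}(s)=\max(-\tau,\min(s,\tau))$. The nonlinear term is at most $\tau\int|g_{j}|\phi\leq C\tau$ by Step 2. The $V$-term and the term with $\nabla\phi$ vanish as $j\to\infty$ by the strong $L^{q}$ convergence, and the $h$-term vanishes by weak convergence. Then \eqref{(2.5)} gives
\[
\limsup_{j}\int_{\{|u_{j}-u|<\tau\}}\phi\, d_{0}F^{N}(\nabla u_{j}-\nabla u)\leq C\tau.
\]
One must also handle the $F^{N-1}(\nabla u)F_{\xi}(\nabla u)$ part, which is fine by weak convergence. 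Since $|\{|u_{j}-u|\geq\tau\}\cap\mathrm{supp}\,\phi|\to0$, Hölder's inequality with exponent $\theta<1$ gives $\int\phi F^{\theta N}(\nabla u_{j}-\nabla u)\to0$. Hence $\nabla u_{j}\to\nabla u$ a.e. along a diagonal subsequence. The quantity $d_{0}>0$ is assumed here; if needed it would be justified via Lemma \ref{Le2.7} and the equivalence $a|\xi|\le F\le b|\xi|$.

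\textbf{Step 4: weak convergence of the flux and the limit equation.} The sequence $F^{N-1}(\nabla u_{j})F_{\xi}(\nabla u_{j})$ is bounded in $(L^{N/(N-1)})^{N}$, because $|F_{\xi}|\leq C$ by property (b). Combined with the a.e. convergence from Step 3, this gives the claimed weak convergence. Finally, pass to the limit in $\langle E_{\epsilon}'(u_{j}),\psi\rangle\to0$, first for $\psi\in C_{c}^{\infty}$ using Step 2 (note $g_{j}\psi\to g\psi$ in $L^{1}$), and then by density for all $\psi\in S$. This shows that $u$ is a weak solution of \eqref{(1.6)}. The case $\epsilon=0$ is identical.
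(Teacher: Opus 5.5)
Your proof is correct, and Steps 1, 2 and 4 match the paper's. The paper gets the global $L^{1}$ convergence of $f(x,u_{j})/F^{0}(x)^{\delta}$ in one go, by proving convergence of the integrals of these nonnegative functions after splitting at $|u_{j}|=K$, rather than through the de Figueiredo--Miyagaki--Ruf lemma plus a tail estimate, but the ingredients are the same.

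The real difference is Step 3. The paper works with the concentration set $\Sigma_{\ell}$ of the energy densities $F^{N}(\nabla u_{j})+V|u_{j}|^{N}$, which is finite by boundedness in $S$. Near each $x^{0}\notin\Sigma_{\ell}$ the local energy is small, so the local singular Moser--Trudinger inequality (Lemma \ref{Le2.1}) bounds $f(x,u_{j})$ uniformly in $L^{p}(\mathcal{W}_{l}(x^{0}))$ for some $p>1$. That bound permits testing with $\psi u_{j}$ and $\psi u$, and gives $\nabla u_{j}\to\nabla u$ in $L^{N}$ on compact subsets of $\mathbb{R}^{N}\setminus\Sigma_{\ell}$.

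Your truncated test function $\phi\,T_{\tau}(u_{j}-u)$ bypasses all of this. It uses only the $L^{1}$ bound on $f(x,u_{j})/F^{0}(x)^{\delta}$, which you already have. It needs no Moser--Trudinger estimate, no concentration set and no covering argument, and it yields convergence in $L^{\theta N}_{\mathrm{loc}}(\mathbb{R}^{N})$ for all $\theta<1$ with no exceptional set. The paper's route buys the stronger $L^{N}_{\mathrm{loc}}$ convergence off $\Sigma_{\ell}$, but the lemma never uses it, so yours is the more economical argument.

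One correction to your aside: positivity of $d_{0}$ in \eqref{(2.5)} does not follow from Lemma \ref{Le2.7} and $a|\xi|\le F(\xi)\le b|\xi|$. If $\{F\le 1\}$ had a flat face, $F_{\xi}$ would be constant on it and $d_{0}=0$, even though norm equivalence still holds. Positivity of $d_{0}$ is a uniform convexity property of $F$. The paper relies on \eqref{(2.5)} in exactly the same way, so this is not a gap relative to its proof.
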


    \begin{proof}
        Let $(u_{j})$ be a Palais-Smale sequence of $E_{\epsilon}$. We have from \eqref{(3.2)} and \eqref{(3.4)} that
        \begin{equation}\label{(3.27)}
            E_{\epsilon}(u_{j})=\frac{1}{N}\|u_{j}\|_{S}^{N}-\int_{\mathbb{R}^{N}}\frac{G(x,u_{j})}{F^{0}(x)^{\delta}}dx-\epsilon\int_{\mathbb{R}^{N}}hu_{j}dx\to c \quad \text{as}\ j\to \infty,
        \end{equation}
        \begin{align}\label{(3.28)}
            &\left|\langle E^{'}_{\epsilon}(u_{j}), \varphi \rangle\right|\nonumber\\
            &=\left|\int_{\mathbb{R}^{N}}(F^{N-1}(\nabla u_{j})F_{\xi}(\nabla u_{j})\nabla \varphi+V(x)|u_{j}|^{N-2}u_{j}\varphi)dx
           -\int_{\mathbb{R}^{N}}\frac{f(x,u_{j})}{F^{0}(x)^{\delta}}\varphi dx
           -\epsilon\int_{\mathbb{R}^{N}}h\varphi dx\right|\nonumber\\
           &\leq\rho_{j}\|\varphi\|_{S}
        \end{align}
        for every $\varphi\in S$, where $\rho_{j}\to 0$ as $j\to \infty$.
        By $f(x,t)\equiv 0$ for $t<0$ and $(B_{2})$, we obtain that $0\leq \eta G(x,u_{j})\leq u_{j}f(x,u_{j})$ for some $\eta>N$. Taking $\varphi=u_{j}$ in \eqref{(3.28)} and multiplying \eqref{(3.27)} by $\eta$, we have
        \begin{align*}
            \left(\frac{\eta}{N}-1\right)\|u_{j}\|_{S}^{N}
            &\leq \left(\frac{\eta}{N}-1\right)\|u_{j}\|_{S}^{N}-\int_{\mathbb{R}^{N}}\frac{\eta G(x,u_{j})-f(x,u_{j})u_{j}}{F^{0}(x)^{\delta}}dx\\
            &\leq \eta|c|+\rho_{j}\|u_{j}\|_{S}+(\eta +1)\epsilon\|h\|_{S^{*}}\|u_{j}\|_{S}.
        \end{align*}
        Therefore $\|u_{j}\|_{S}$ is bounded. Combining \eqref{(3.27)} and \eqref{(3.28)}, we deduce
         \begin{equation}\label{(3.29)}
          \int_{\mathbb{R}^{N}}
            \frac{f(x,u_{j})u_{j}}{F^{0}(x)^{\delta}}dx\leq C, \quad
         \int_{\mathbb{R}^{N}}
            \frac{G(x,u_{j})}{F^{0}(x)^{\delta}}dx\leq C
        \end{equation}
        for some constant $C$ depending on $\eta$, $N$ and $\|h\|_{S^{*}}$.
        Since $(u_{j})$ is bounded in $S$, up to a subsequence, we may assume that $u_{j}\rightharpoonup u$ weakly in $S$ for some $u\in S$.
        By Lemma \ref{Le2.5}, $u_{j}\to u$ strongly in $L^{q}(\mathbb{R}^{N})$ for every $q\geq 1$, and consequently $u_{j}\to u$ a.e. in $\mathbb{R}^{N}$.

        Next, we will show that, up to a subsequence,
        \begin{equation}\label{(3.30)}
          \lim\limits_{j\to\infty}
          \int_{\mathbb{R}^{N}}\frac{\left|f(x,u_{j})-f(x,u)\right|}{F^{0}(x)^{\delta}}dx=0.
        \end{equation}
         Since $f(x,\cdot)\geq 0$, it suffices to prove that, up to a subsequence,
        \begin{equation}\label{(3.31)}
          \lim\limits_{j\to\infty}
          \int_{\mathbb{R}^{N}}\frac{f(x,u_{j})}{F^{0}(x)^{\delta}}dx
          =\int_{\mathbb{R}^{N}}\frac{f(x,u)}{F^{0}(x)^{\delta}}dx.
        \end{equation}
         Since $u$ and $\frac{f(x,u)}{F^{0}(x)^{\delta}}$ belong to $L^{1}(\mathbb{R}^{N})$, we obtain
        \begin{equation}\label{(3.32)}
          \lim\limits_{\sigma\to+\infty}\int_{|u|\geq \sigma}\frac{f(x,u)}{F^{0}(x)^{\delta}}dx=0.
        \end{equation}
         For every $\gamma>0$, there exists some $K>\frac{C}{\gamma}$ such that
        \begin{equation}\label{(3.33)}
            \int_{|u|\geq K}\frac{f(x,u)}{F^{0}(x)^{\delta}}dx<\gamma,
        \end{equation}
        where $C$ is the constant in \eqref{(3.29)}.
       Then by \eqref{(3.29)}, it follows that
        \begin{equation}\label{(3.34)}
            \int_{|u_{j}|\geq K}\frac{f(x,u_{j})}{F^{0}(x)^{\delta}}dx
            \leq \frac{1}{K}\int_{|u_{j}|\geq K}\frac{f(x,u_{j})u_{j}}{F^{0}(x)^{\delta}}dx<\gamma.
        \end{equation}

        Now we claim that
        \begin{equation}\label{(3.35)}
            \lim\limits_{j\to\infty}\int_{|u_{j}|< K}\frac{f(x,u_{j})}{F^{0}(x)^{\delta}}dx=\int_{|u|< K}\frac{f(x,u)}{F^{0}(x)^{\delta}}dx.
        \end{equation}
        Indeed, it is easy to see that
        \begin{align*}
            &\left|\int_{|u_{j}|< K}\frac{f(x,u_{j})}{F^{0}(x)^{\delta}}dx-\int_{|u|< K}\frac{f(x,u)}{F^{0}(x)^{\delta}}dx\right|\\
            &\leq\left|\int_{\mathbb{R}^{N}}\frac{\mathcal{X}_{|u_{j}|<K}(|f(x,u_{j})|-|f(x,u)|)}{F^{0}(x)^{\delta}}dx\right|
            +\left|\int_{\mathbb{R}^{N}}\frac{(\mathcal{X}_{|u_{j}|<K}-\mathcal{X}_{|u|<K})|f(x,u)|}{F^{0}(x)^{\delta}}dx\right|.
        \end{align*}
        From $(B_{1})$, there exists a constant $C_{1}$, depending only on $K$, such that $|f(x,u_{j}(x))|\leq C_{1}|u_{j}(x)|^{N-1}$ for every $x\in \{x\in\mathbb{R}^{N}: |u_{j}|<K\}$.  Notice that $\frac{|u_{j}|^{N-1}}{F^{0}(x)^{\delta}}\to \frac{|u|^{N-1}}{F^{0}(x)^{\delta}}$ strongly in $L^{1}(\mathbb{R}^{N})$ and $u_{j}\to u$ a.e. in $\mathbb{R}^{N}$. Moreover, $\mathcal{X}_{|u_{j}|<K}(|f(x,u_{j})|-|f(x,u)|)\to 0$ a.e. in $\mathbb{R}^{N}$. By the generalized Lebesgue dominated convergence theorem, we obtain
        \begin{equation*}
        \left|\int_{\mathbb{R}^{N}}\frac{\mathcal{X}_{|u_{j}|<K}(|f(x,u_{j})|-|f(x,u)|)}{F^{0}(x)^{\delta}}dx\right|\to 0 \quad \text{as}\ j\to\infty.
        \end{equation*}
        Since
        \begin{equation*}
            \{x\in\mathbb{R}^{N}:|u_{j}(x)|<K\}\backslash \{x\in\mathbb{R}^{N}:|u(x)|<K\}\subset \{x\in\mathbb{R}^{N}:|u(x)|\geq K\},
        \end{equation*}
       then we have
        \begin{equation*}
            \left|\int_{\mathbb{R}^{N}}\frac{(\mathcal{X}_{|u_{j}|<K}-\mathcal{X}_{|u|<K})|f(x,u)|}{F^{0}(x)^{\delta}}dx\right|\leq \int_{|u|\geq K}\frac{f(x,u)}{F^{0}(x)^{\delta}}dx<\gamma.
        \end{equation*}
        With the help of \eqref{(3.33)}, \eqref{(3.34)} and \eqref{(3.35)}, we can find $L>0$ such that, for every $j>L$,
        \begin{equation*}
            \left|\int_{\mathbb{R}^{N}}\frac{f(x,u_{j})}{F^{0}(x)^{\delta}}dx-\int_{\mathbb{R}^{N}}\frac{f(x,u)}{F^{0}(x)^{\delta}}dx\right|<3\gamma.
        \end{equation*}
        Thus \eqref{(3.31)} holds, which proves \eqref{(3.30)}.

        Moreover, by $(B_{1})$ and $(B_{3})$, there exist constants $c_{1}, c_{2}>0$ such that
        \begin{equation*}
            G(x,u_{j})\leq c_{1}|u_{j}|^{N}+c_{2}f(x,u_{j}).
        \end{equation*}
        Using \eqref{(3.30)}, Lemma \ref{Le2.5} and the generalized Lebesgue dominated convergence theorem, we obtain
        \begin{equation*}
        \lim\limits_{j\to\infty}\int_{\mathbb{R}^{N}}
         \frac{\left|G(x,u_{j})-G(x,u)\right|}{F^{0}(x)^{\delta}}dx=0.
        \end{equation*}

        To prove the remaining part of the lemma, we define, for every given $\ell>0$, the concentration set
        \begin{equation*}
         \Sigma_{\ell}=\left\{x\in\mathbb{R}^{N}: \lim\limits_{l\to 0}\lim\limits_{j\to\infty}\int_{\mathcal{W}_{l}(x)}\left(F^{N}(\nabla u_{j})+V(x)|u_{j}|^N\right)dx\geq \ell\right\}.
        \end{equation*}
        Since $(u_{j})$ is bounded in $S$, the set $\Sigma_{\ell}$ is finite. For every $x^{0}\in \mathbb{R}^{N}\backslash \Sigma_{\ell}$, there exists $l$ with $0<l<\mathrm{dist}(x^{0},\Sigma_{\ell})$ such that
        \begin{equation*}
           \lim\limits_{j\to\infty}
           \int_{\mathcal{W}_{l}(x^{0})}\left(F^{N}(\nabla u_{j})+V(x)|u_{j}|^N\right)dx<\ell.
        \end{equation*}
        Hence, for sufficiently large $j$,
        \begin{equation}\label{(3.36)}
           \int_{\mathcal{W}_{l}(x^{0})}\left(F^{N}(\nabla u_{j})+V(x)|u_{j}|^N\right)dx<\ell.
        \end{equation}
        Then by condition $(A_1)$, we have
        \begin{equation}\label{(3.36b)}
           \int_{\mathcal{W}_{l}(x^{0})}\left(F^{N}(\nabla u_{j})+V^{*}|u_{j}|^N\right)dx\leq\int_{\mathcal{W}_{l}(x^{0})}\left(F^{N}(\nabla u_{j})+V(x)|u_{j}|^N\right)dx<\ell.
        \end{equation}

        By Lemma \ref{Le2.1}, for sufficiently small $\ell>0$, there exists $p>1$ such that
        \begin{equation}\label{(3.37)}
            \int_{\mathcal{W}_{l}(x^{0})}\frac{|f(x,u_{j})|^{p}}{F^{0}(x)^{\delta}}dx\leq C.
        \end{equation}
        For any $H>0$, we suppose that
        \begin{equation*}
            D_{H}=\{x\in\mathcal{W}_{l}(x^{0}): |u(x)|\geq H\}.
        \end{equation*}
        Then
        \begin{align*}
            \int_{D_{H}}\frac{|f(x,u_{j})-f(x,u)||u|}{F^{0}(x)^{\delta}}dx
            &\leq \left(\int_{D_{H}}\frac{|f(x,u_{j})-f(x,u)|^{q}}{F^{0}(x)^{\delta}}dx\right)^{\frac{1}{q}}\left(\int_{D_{H}}\frac{|u|^{q^{0}}}{F^{0}(x)^{\delta}}dx\right)^{\frac{1}{q^{0}}}\\
            &\leq \left(\left(\int_{\mathcal{W}_{l}(x^{0})}\frac{|f(x,u_{j})|^{q}}{{F^{0}(x)^{\delta}}}dx\right)^{\frac{1}{q}}+\left(\int_{\mathcal{W}_{l}(x^{0})}\frac{|f(x,u)|^{q}}{{F^{0}(x)^{\delta}}}dx\right)^{\frac{1}{q}}\right)\\
           &\times\left(\int_{\mathcal{W}_{l}(x^{0})}\frac{1}{F^{0}(x)^{\delta k}}dx\right)^{\frac{1}{q^{0}k}}\left(\int_{D_{H}}|u|^{q^{0}k^{0}}dx\right)^{\frac{1}{q^{0}k^{0}}}\\
           &\leq C\left(\int_{D_{H}}|u|^{q^{0}k^{0}}dx\right)^{\frac{1}{q^{0}k^{0}}},
        \end{align*}
        where $\frac{1}{q}+\frac{1}{q^{0}}=1, \frac{1}{k}+\frac{1}{k^{0}}=1$ and $0<\delta k<N$.
        Since $u\in L^{q^{0}k^{0}}(\mathcal{W}_{l}(x^{0}))$, by choosing $H$ sufficiently large and for any $d>0$, we have
        \begin{equation}\label{(3.38)}
            \int_{D_{H}}\frac{|f(x,u_{j})-f(x,u)||u|}{F^{0}(x)^{\delta}}dx<d.
        \end{equation}
         By \eqref{(3.30)}, we obtain
        \begin{equation}\label{(3.39)}
            \lim\limits_{j\to \infty}\int_{\mathcal{W}_{l}(x^{0})\backslash D_{H}}\frac{|f(x,u_{j})-f(x,u)||u|}{F^{0}(x)^{\delta}}dx=0.
        \end{equation}
        Thus
        \begin{equation*}
            \lim\limits_{j\to \infty}\int_{\mathcal{W}_{l}(x^{0})}\frac{|f(x,u_{j})-f(x,u)||u|}{F^{0}(x)^{\delta}}dx<d.
        \end{equation*}
        Since $d>0$ is arbitrary, we obtain
        \begin{equation}\label{(3.40)}
            \lim\limits_{j\to \infty}\int_{\mathcal{W}_{l}(x^{0})}\frac{|f(x,u_{j})-f(x,u)||u|}{F^{0}(x)^{\delta}}dx=0.
        \end{equation}
        By the H\"{o}lder inequality, \eqref{(3.37)} and $u_{j}\to u$ strongly in $L^{q}(\mathbb{R}^{N})$ for every $q\geq 1$, we obtain
        \begin{align}\label{(3.41)}
            &\int_{\mathcal{W}_{l}(x^{0})}\frac{|f(x,u_{j})||u_{j}-u|}{F^{0}(x)^{\delta}}dx\nonumber\\
            &\leq \left(\int_{\mathcal{W}_{l}(x^{0})}\frac{|f(x,u_{j})|^{q_{1}}}{F^{0}(x)^{\delta}}dx\right)^{\frac{1}{q_{1}}} \left(\int_{\mathcal{W}_{l}(x^{0})}\frac{1}{F^{0}(x)^{\delta k_{1}}}dx\right)^{\frac{1}{q_{2}k_{1}}}
            \left(\int_{\mathcal{W}_{l}(x^{0})}|u_{j}-u|^{q_{2}k_{2}}dx\right)^{\frac{1}{q_{2}k_{2}}}\nonumber\\
            &\leq C\left(\int_{\mathcal{W}_{l}(x^{0})}|u_{j}-u|^{q_{2}k_{2}}dx\right)^{\frac{1}{q_{2}k_{2}}} \to 0 \quad \text{as}\ j\to\infty,
            \end{align}
        where $\frac{1}{q_{1}}+\frac{1}{q_{2}}=1, \frac{1}{k_{1}}+\frac{1}{k_{2}}=1$ and $0<\delta k_{1}<N$.
       Then by \eqref{(3.40)} and \eqref{(3.41)}, we have
        \begin{equation*}
           \lim\limits_{j\to\infty}
           \int_{\mathcal{W}_{l}(x^{0})}\frac{|f(x,u_{j})u_{j}-f(x,u)u|}{F^{0}(x)^{\delta}}dx=0.
        \end{equation*}
        For any compact set $I\subset\subset\mathbb{R}^{N}\backslash\Sigma_{\ell}$, we have
        \begin{equation}\label{(3.42)}
            \lim\limits_{j\to\infty}
           \int_{I}\frac{|f(x,u_{j})u_{j}-f(x,u)u|}{F^{0}(x)^{\delta}}dx=0.
        \end{equation}
        Next we claim that
        \begin{equation}\label{(3.43)}
            \lim\limits_{j\to\infty}
             \int_{I}F^{N}(\nabla u_{j}-\nabla u)dx=0
        \end{equation}
        for any compact set $I\subset\subset\mathbb{R}^{N}\backslash\Sigma_{\ell}$.
        In fact, it suffices to prove that, for every $x^{0}\in \mathbb{R}^{N}\backslash\Sigma_{\ell}$ and $l$,
        \begin{equation}\label{(3.44)}
             \lim\limits_{j\to\infty}
             \int_{\mathcal{W}_{\frac{l}{2}}(x^{0})}F^{N}(\nabla u_{j}-\nabla u)dx=0.
        \end{equation}
        To this end, take $\psi\in C_{0}^{\infty}(\mathcal{W}_{l}(x^{0}))$ with $0\leq \psi \leq 1$ and $\psi\equiv 1$ on $\mathcal{W}_{\frac{l}{2}}(x^{0})$. Clearly, $(\psi u_{j})$ is bounded in $S$. Taking $\varphi=\psi u_{j}$ and $\varphi=\psi u$ in \eqref{(3.28)}, respectively, it follows that
        \begin{align}\label{(3.45)}
           &\int_{\mathcal{W}_{l}(x^{0})}
           \psi(F^{N-1}(\nabla u_{j})F_{\xi}(\nabla u_{j})-F^{N-1}(\nabla u)F_{\xi}(\nabla u))(\nabla u_{j}-\nabla u)dx\nonumber\\
           &\leq \int_{\mathcal{W}_{l}(x^{0})}
           F^{N-1}(\nabla u_{j})F_{\xi}(\nabla u_{j})\nabla\psi(u-u_{j})dx+
           \int_{\mathcal{W}_{l}(x^{0})}
           \psi F^{N-1}(\nabla u)F_{\xi}(\nabla u)(\nabla u-\nabla u_{j})dx\nonumber\\
           &\quad+\int_{\mathcal{W}_{l}(x^{0})}
           \psi(u_{j}-u)\frac{f(x,u_{j})}{F^{0}(x)^{\delta}}dx
           +\int_{\mathcal{W}_{l}(x^{0})}
           \psi V(x)|u_{j}|^{N-2}u_{j}(u-u_{j})dx\nonumber\\
           &\quad+\rho_{j}\|\psi u_{j}\|_{S}+\rho_{j}\|\psi u\|_{S}+\epsilon\|h\|_{S^{*}}\|u_{j}-u\|_{S}.
        \end{align}

         By the H\"{o}lder inequality and the compact embedding $S\hookrightarrow\hookrightarrow L^{N}(\mathbb{R}^{N})$, we obtain
        \begin{equation}\label{(3.46)}
            \lim\limits_{j\to\infty}
            \int_{\mathcal{W}_{l}(x^{0})}
           F^{N-1}(\nabla u_{j})F_{\xi}(\nabla u_{j})\nabla\psi(u-u_{j})dx=0.
        \end{equation}
        From $\nabla u_{j}\rightharpoonup \nabla u$ weakly in $(L^{N}(\mathbb{R}^{N}))^{N}$, we have
        \begin{equation}\label{(3.47)}
            \lim\limits_{j\to\infty}
            \int_{\mathcal{W}_{l}(x^{0})}
           \psi F^{N-1}(\nabla u)F_{\xi}(\nabla u)(\nabla u-\nabla u_{j})dx=0.
        \end{equation}
        By the H\"{o}lder inequality and \eqref{(3.41)}, we obtain
        \begin{equation}\label{(3.48)}
             \lim\limits_{j\to\infty}
             \int_{\mathcal{W}_{l}(x^{0})}
           \psi(u_{j}-u)\frac{f(x,u_{j})}{F^{0}(x)^{\delta}}dx=0.
        \end{equation}
        Applying the H\"{o}lder inequality and the compact embedding $S\hookrightarrow\hookrightarrow L^{N}(\mathbb{R}^{N})$, we obtain
        \begin{equation}\label{(3.49)}
            \lim\limits_{j\to\infty}
            \int_{\mathcal{W}_{l}(x^{0})}
           \psi V(x)|u_{j}|^{N-2}u_{j}(u-u_{j})dx=0.
        \end{equation}
        Combining \eqref{(3.46)}, \eqref{(3.47)}, \eqref{(3.48)}, \eqref{(3.49)} and $\rho_{j}\to 0$, we conclude that the left-hand side of \eqref{(3.45)} tends to $0$ as $j\to\infty$.
        Inequality \eqref{(3.44)} follows from \eqref{(3.45)} and \eqref{(2.5)}. A standard covering argument then yields \eqref{(3.43)}. Since $\Sigma_{\ell}$ is finite, we get
        \begin{equation*}
            \nabla u_{j}(x)\to \nabla u(x) \quad a.e.\ \text{in}\ \mathbb{R}^{N}.
        \end{equation*}
        Therefore, up to a subsequence, we have
        \begin{equation*}
            F^{N-1}(\nabla u_{j})F_{\xi}(\nabla u_{j})\rightharpoonup F^{N-1}(\nabla u)F_{\xi}(\nabla u) \quad \text{weakly in}\ (L^{\frac{N}{N-1}}(\mathcal{W}_{r}(0)))^{N}
        \end{equation*}
        for every $r>0$. For each $\varphi\in C_{0}^{\infty}(\mathbb{R}^{N})$, there exists $r_{0}>0$ such that $\mathrm{supp}\,\varphi\subset\mathcal{W}_{r_{0}}(0)$.
        By the density of $C_{0}^{\infty}(\mathbb{R}^{N})$ in $L^{\frac{N}{N-1}}(\mathbb{R}^{N})$, for every $\varphi\in L^{\frac{N}{N-1}}(\mathbb{R}^{N})$, we have
        \begin{equation*}
            \lim\limits_{j\to\infty}
            \int_{\mathbb{R}^{N}}
            (F^{N-1}(\nabla u_{j})F_{\xi}(\nabla u_{j})-F^{N-1}(\nabla u)F_{\xi}(\nabla u))\varphi dx=0,
        \end{equation*}
        which implies that
        \begin{equation}\label{(3.50)}
            F^{N-1}(\nabla u_{j})F_{\xi}(\nabla u_{j})\rightharpoonup F^{N-1}(\nabla u)F_{\xi}(\nabla u) \quad \text{weakly in}\ (L^{\frac{N}{N-1}}(\mathbb{R}^{N}))^{N}.
        \end{equation}
        Letting $j\to\infty$ in \eqref{(3.28)}, it follows that
        \begin{equation*}
            \int_{\mathbb{R}^{N}}(F^{N-1}(\nabla u)F_{\xi}(\nabla u)\nabla \phi+V(x)|u|^{N-2}u\varphi)dx
           -\int_{\mathbb{R}^{N}}\frac{f(x,u_{j})}{F^{0}(x)^{\delta}}\varphi dx
           -\epsilon\int_{\mathbb{R}^{N}}h\varphi dx=0
        \end{equation*}
        for every $\varphi\in C_{0}^{\infty}(\mathbb{R}^{N})$, which is dense in $S$. Thus $u$ is a weak solution of \eqref{(1.6)}; the case $\epsilon=0$ follows in the same way. The proof of Lemma \ref{Le3.6} is finished.
    \end{proof}

    \section{Proof of the main results}

    \subsection{Proof of Theorem \ref{Th1.1}}

    By Lemmas \ref{Le3.1} and \ref{Le3.2}, $E$ satisfies all the assumptions of the mountain-pass theorem without the Palais-Smale condition \cite{42}: $E\in\mathcal{C}^{1}(S, \mathbb{R})$; $E(0)=0$; $E(u)\geq \gamma>0$ when $\|u\|_{S}=l$; and $E(e)<0$ for some $e\in S$ with $\|e\|_{S}>l$. Hence there exists a sequence $(u_{j})$ in $S$ such that
    \begin{equation*}
            E(u_{j})\to c>0, \quad E^{'}(u_{j})\to 0 \quad \text{in}\ S^{*}\ \text{as}\ j\to \infty,
    \end{equation*}
    where $$c=\min\limits_{\mu\in\Gamma} \max\limits_{u\in\mu}E(u)\geq \gamma$$
    is the min-max value of $E$, where $\Gamma=\{\mu\in\mathcal{C}([0,1],S):\mu(0)=0, \mu(1)=e\}$.
    By \eqref{(3.1)} and \eqref{(3.3)}, we obtain
    \begin{equation}\label{(4.1)}
            E(u_{j})=\frac{1}{N}\|u_{j}\|_{S}^{N}-\int_{\mathbb{R}^{N}}\frac{G(x,u_{j})}{F^{0}(x)^{\delta}}dx\to c \quad \text{as}\ j\to \infty,
        \end{equation}
        \begin{align}\label{(4.2)}
            \left|\langle E^{'}(u_{j}), \varphi \rangle\right|
            &=\left|\int_{\mathbb{R}^{N}}(F^{N-1}(\nabla u_{j})F_{\xi}(\nabla u_{j})\nabla \varphi+V(x)|u_{j}|^{N-2}u_{j}\varphi)dx
           -\int_{\mathbb{R}^{N}}\frac{f(x,u_{j})}{F^{0}(x)^{\delta}}\varphi dx
           \right|\nonumber\\
           &\leq\rho_{j}\|\varphi\|_{S}
        \end{align}
    for every $\varphi\in S$, where $\rho_{j}\to 0$ as $j\to \infty$.
    By Lemma \ref{Le3.6}, there exists a subsequence, still denoted by $(u_{j})$, such that
    \begin{equation}\label{(4.3)}
        \left\{
        \begin{aligned}
            &u_{j}\rightharpoonup u\quad\text{weakly in}\ S,\\
            &u_{j}\to u\quad\text{strongly in}\ L^{q}(\mathbb{R}^{N}), \forall q\geq 1,\\
            &\lim\limits_{j\to\infty}
          \int_{\mathbb{R}^{N}}\frac{G(x,u_{j})}{F^{0}(x)^{\delta}}dx
          =\int_{\mathbb{R}^{N}}\frac{G(x,u)}{F^{0}(x)^{\delta}}dx,\\
          &u\ \text{is a weak solution of \eqref{(1.5)}}.
        \end{aligned}
        \right.
    \end{equation}
    We now show that $u$ is nontrivial. Suppose that $u\equiv 0$ by the contrary. Note that $G(x,0)=0$ for every $x\in\mathbb{R}^{N}$. Combining \eqref{(4.1)} and \eqref{(4.3)}, we have
    \begin{equation}\label{(4.4)}
        \lim\limits_{j\to\infty}\|u_{j}\|_{S}^{N}=Nc>0.
    \end{equation}
    By $(B_{5})$ and Lemma \ref{Le3.5}, we have $0<c<\frac{1}{N}\left(\frac{N-\delta}{N}\frac{\lambda_{N}}{\lambda_{0}}\right)^{N-1}$. Thus we can find $\sigma_{0}>0$ and $L>0$ such that $\|u_{j}\|_{S}^{N}\leq \left(\frac{N-\delta}{N}\frac{\lambda_{N}}{\lambda_{0}}-\sigma_{0}\right)^{N-1}$ for every $j>L$. Choosing $q>1$ sufficiently close to $1$, we may assume that $q\lambda_{0}\|u_{j}\|_{S}^{\frac{N}{N-1}}\leq \left(1-\frac{\delta}{N}\right)\lambda_{N}-\frac{\lambda_{0}\sigma_{0}}{2}$ for every $j>L$.
    By $(B_{1})$, it follows that
    \begin{equation*}
        \left|f(x,u_{j})u_{j}\right|\leq a_{1}|u_{j}|^{N}+a_{2}|u_{j}|\Phi\left(N,\lambda_{0}|u_{j}|^{\frac{N}{N-1}}\right).
    \end{equation*}
    By the H\"{o}lder inequality, Lemmas \ref{Le2.4} and \ref{Le2.2}, we obtain
    \begin{align*}
       \int_{\mathbb{R}^{N}}
       \frac{\left|f(x,u_{j})u_{j}\right|}{F^{0}(x)^{\delta}}dx
       &\leq a_{1}\int_{\mathbb{R}^{N}}\frac{|u_{j}|^{N}}{F^{0}(x)^{\delta}}dx
       +a_{2}\int_{\mathbb{R}^{N}}
       \frac{|u_{j}|\Phi(N,\lambda_{0}|u_{j}|^{\frac{N}{N-1}})}{F^{0}(x)^{\delta}}dx\\
       &\leq a_{1}\int_{\mathbb{R}^{N}}\frac{|u_{j}|^{N}}{F^{0}(x)^{\delta}}dx
       +a_{2}\left(\int_{\mathbb{R}^{N}}
       \frac{|u_{j}|^{q^{0}}}{F^{0}(x)^{\delta}}dx\right)^{\frac{1}{q^{0}}}
       \left(\int_{\mathbb{R}^{N}}
       \frac{\Phi(N,q\lambda_{0}|u_{j}|^{\frac{N}{N-1}})}{F^{0}(x)^{\delta}}dx\right)^{\frac{1}{q}}\\
       &\leq a_{1}\int_{\mathbb{R}^{N}}\frac{|u_{j}|^{N}}{F^{0}(x)^{\delta}}dx
       +C\left(\int_{\mathbb{R}^{N}}
       \frac{|u_{j}|^{q^{0}}}{F^{0}(x)^{\delta}}dx\right)^{\frac{1}{q^{0}}}
       \to 0 \quad\text{as}\ j\to\infty,
    \end{align*}
    where $\frac{1}{q^{0}}+\frac{1}{q}=1$.
    Taking $\varphi=u_{j}$ in \eqref{(4.2)}, we conclude that $$\|u_{j}\|_{S}^{N}\to 0 \quad \text{as}\ j\to\infty,$$ which contradicts \eqref{(4.4)}, thus $u\not\equiv 0$. Then the proof of Theorem \ref{Th1.1} is completed. \qed

    \subsection{Proof of Theorem \ref{Th1.2}}

    \textbf{Step 1: Existence of a positive mountain-pass type solution.}

    By Lemmas \ref{Le3.1} and \ref{Le3.2}, there exists $\epsilon_{1}>0$ such that, when $0<\epsilon<\epsilon_{1}$, $E_{\epsilon}$ fulfills the following conditions of the mountain-pass theorem without the Palais-Smale condition: $E_{\epsilon}\in\mathcal{C}^{1}(S, \mathbb{R})$; $E_{\epsilon}(0)=0$; $E_{\epsilon}(u)\geq \theta_{\epsilon}>0$ when $\|u\|_{S}=l_{\epsilon}$; and $E_{\epsilon}(e)<0$ for some $e\in S$ with $\|e\|_{S}>\max\{l_{\epsilon}, 1\}$.
    Then there exists a sequence $(w_{j})$ in $S$ such that
    \begin{equation}\label{(4.5)}
            E_{\epsilon}(w_{j})\to c_{K}>0, \quad E_{\epsilon}^{'}(w_{j})\to 0 \quad \text{in}\ S^{*}\ \text{as}\ j\to \infty,
    \end{equation}
    where $$c_{K}=\min\limits_{\mu\in\Gamma} \max\limits_{u\in\mu}E_{\epsilon}(u)\geq \theta_{\epsilon}$$
    is the min-max value of $E_{\epsilon}$ and $\Gamma=\{\mu\in\mathcal{C}([0,1],S):\mu(0)=0, \mu(1)=e\}$.
    By Lemma \ref{Le3.6}, up to a subsequence, $w_{j}\rightharpoonup u_{K}$ weakly in $S$ and $u_{K}$ is a weak solution of \eqref{(1.6)}. Since $h\geq 0$ and $f(x,t)\equiv 0$ for $t<0$, the negative-part argument in the introduction shows that $u_{K}$ is a positive weak solution of \eqref{(1.6)}.

    \textbf{Step 2: Existence of a positive solution with negative energy.}

    By Lemma \ref{Le3.2},
    we have $l_{\epsilon}\to 0$ as $\epsilon\to 0$. Thus there exists $\epsilon_{2}$ with $0<\epsilon_{2}<\epsilon_{1}$ such that, for $0<\epsilon<\epsilon_{2}$, we have
    \begin{equation}\label{(4.6)}
        l_{\epsilon}<\left(\frac{N-\delta}{N}\frac{\lambda_{N}}{\lambda_{0}}\right)^{\frac{N-1}{N}}.
    \end{equation}
    By $(B_{1})$ and $(B_{2})$, we have
    \begin{equation}\label{(4.7)}
        G(x,u)\leq a_{1}|u|^{N}+a_{2}|u|\Phi\left(N,\lambda_{0}\|u\|_{S}^{\frac{N}{N-1}}\left(\frac{|u|}{\|u\|_{S}}\right)^{\frac{N}{N-1}}\right).
    \end{equation}
    When $\|u\|_{S}\leq l_{\epsilon}$, we have $\lambda_{0}\|u\|_{S}^{\frac{N}{N-1}}<\left(1-\frac{\delta}{N}\right)\lambda_{N}$, and therefore, by Lemmas \ref{Le2.4} and \ref{Le2.2}, $\frac{G(x,u)}{F^{0}(x)^{\delta}}$ is bounded in $L^{p}(\mathbb{R}^{N})\cap L^{1}(\mathbb{R}^{N})$ for some $p>1$.
    Thus $E_{\epsilon}$ is bounded from below on the closed Wulff ball $\overline{\mathcal{W}}_{l_{\epsilon}}=\{u\in S: \|u\|_{S}\leq l_{\epsilon}\}$.

    The set $\overline{\mathcal{W}}_{l_{\epsilon}}\subset S$ is a complete metric space with the metric induced by the norm of $S$; moreover, it is convex, and $E_{\epsilon}$ is of class $\mathcal{C}^{1}$ and bounded from below on it.
    By Ekeland's variational principle \cite{43}, there exists a sequence $(y_{j})\subset\overline{\mathcal{W}}_{l_{\epsilon}}\subset S$ such that
    \begin{equation}\label{(4.8)}
        E_{\epsilon}(y_{j})\to c_{\epsilon}:= \inf\limits_{\|u\|_{S}\leq l_{\epsilon}}E_{\epsilon}(u)
    \end{equation}
    and
    \begin{equation}\label{(4.9)}
        E_{\epsilon}^{'}(y_{j})\to 0 \quad \text{in}\ S^{*}\ \text{as}\ j\to \infty,
    \end{equation}
    In addition, we have $c_{\epsilon}<0$ by Lemma \ref{Le3.3}.

    Up to a subsequence, we may assume that $y_{j}\rightharpoonup u_{0}$ weakly in $S$.
    \eqref{(4.9)} is equivalent to
    \begin{equation}\label{(4.10)}
        \left|\langle E^{'}(y_{j}), \psi \rangle\right|\leq \rho_{j}\|\psi\|_{S},
    \end{equation}
    for every $\psi\in S$, where $\rho_{j}\to 0$ as $j\to \infty$.
    Taking $\psi=y_{j}-u_{0}$ in \eqref{(4.10)}, we obtain
    \begin{align*}
       &\int_{\mathbb{R}^{N}}
       (F^{N-1}(\nabla y_{j})F_{\xi}(\nabla y_{j})\nabla (y_{j}-u_{0})+V(x)|y_{j}|^{N-2}y_{j}(y_{j}-u_{0}))dx\\
       &\quad-\int_{\mathbb{R}^{N}}\frac{f(x,y_{j})}{F^{0}(x)^{\delta}}(y_{j}-u_{0}) dx-\epsilon\int_{\mathbb{R}^{N}}h(y_{j}-u_{0})dx=o_{j}(1),
    \end{align*}
    where $o_{j}(1)\to 0$ as $j\to \infty$.
    By the H\"{o}lder inequality, \eqref{(4.6)}, Lemmas \ref{Le2.2} and \ref{Le2.5}, we have
    \begin{equation*}
        \int_{\mathbb{R}^{N}}\frac{f(x,y_{j})}{F^{0}(x)^{\delta}}(y_{j}-u_{0})dx=o_{j}(1), \quad \epsilon\int_{\mathbb{R}^{N}}h(y_{j}-u_{0})dx=o_{j}(1).
    \end{equation*}
    Then
    \begin{equation}\label{(4.11)}
        \int_{\mathbb{R}^{N}}
       (F^{N-1}(\nabla y_{j})F_{\xi}(\nabla y_{j})\nabla (y_{j}-u_{0})+V(x)|y_{j}|^{N-2}y_{j}(y_{j}-u_{0}))dx=o_{j}(1).
    \end{equation}
    Moreover, since $y_{j}\rightharpoonup u_{0}$ weakly in $S$, thus
    \begin{equation}\label{(4.12)}
        \int_{\mathbb{R}^{N}}
       (F^{N-1}(\nabla u_{0})F_{\xi}(\nabla u_{0})\nabla (y_{j}-u_{0})+V(x)|u_{0}|^{N-2}u_{0}(y_{j}-u_{0}))dx=o_{j}(1).
    \end{equation}
    Subtracting \eqref{(4.12)} from \eqref{(4.11)} and using inequalities \eqref{(2.5)} and \eqref{(2.6)}, we derive that
    \begin{align*}
      &\int_{\mathbb{R}^{N}}
       \left((F^{N-1}(\nabla y_{j})F_{\xi}(\nabla y_{j})-F^{N-1}(\nabla u_{0})F_{\xi}(\nabla u_{0}))(\nabla y_{j}-\nabla u_{0})\right.\\
       &\quad\left.+V(x)(|y_{j}|^{N-2}y_{j}-|u_{0}|^{N-2}u_{0})(y_{j}-u_{0})\right)dx\\
       &\geq \int_{\mathbb{R}^{N}}\left(d_{0}F^{N}(\nabla y_{j}-\nabla u_{0})+V(x)2^{2-N}|y_{j}-u_{0}|^{N}\right)dx\\
       &\geq d_{1}\int_{\mathbb{R}^{N}}\left(F^{N}(\nabla (y_{j}-u_{0}))+V(x)|y_{j}-u_{0}|^{N}\right)dx\\
       &=d_{1}\|y_{j}-u_{0}\|_{S}^{N},
    \end{align*}
    where $d_{1}=\min\{d_{0}, 2^{2-N}\}$.
    Hence $\|y_{j}-u_{0}\|_{S}^{N}\to 0$ as $j\to\infty$, i.e. $y_{j}\to u_{0}$ strongly in $S$ as $j\to\infty$.
    Since $E_{\epsilon}\in\mathcal{C}^{1}(S,\mathbb{R})$, we have $E_{\epsilon}(u_{0})=c_{\epsilon}$ and $E_{\epsilon}^{'}(u_{0})=0$. Therefore $u_{0}$ is a weak solution of \eqref{(1.6)}; by the negative-part argument in the introduction, $u_{0}$ is positive. Since $c_{\epsilon}<0$, $u_{0}$ is a positive weak solution of \eqref{(1.6)} with negative energy.

    \textbf{Step 3: The two solutions are distinct.}

    It remains to show that, for $\epsilon>0$ sufficiently small, $u_{K}\not\equiv u_{0}$.
    We argue by contradiction. Assume that $u_{K}\equiv u_{0}$; then $w_{j}\rightharpoonup u_{0}$ weakly in $S$. Therefore

    \begin{equation}\label{(4.13)}
        E_{\epsilon}(w_{j})\to c_{K}>0, \quad \left|\langle E^{'}_{\epsilon}(w_{j}), \psi \rangle\right|\leq \rho_{j}\|\psi\|_{S},\ \forall\psi\in S,
    \end{equation}
    where $\rho_{j}\to 0$ as $j\to \infty$.
    By Lemma \ref{Le3.6}, we obtain
    \begin{equation}\label{(4.14)}
        \int_{\mathbb{R}^{N}}\frac{G(x,w_{j})}{F^{0}(x)^{\delta}}dx\to \int_{\mathbb{R}^{N}}\frac{G(x,u_{0})}{F^{0}(x)^{\delta}}dx \quad \text{as}\ j\to\infty.
    \end{equation}
   Moreover, since $w_{j}\rightharpoonup u_{0}$ weakly in $S$, by the H\"{o}lder inequality and Lemma \ref{Le2.5} we derive that
    \begin{equation}\label{(4.15)}
        \int_{\mathbb{R}^{N}}hw_{j}dx\to \int_{\mathbb{R}^{N}}hu_{0}dx \quad \text{as}\ j\to\infty.
    \end{equation}
     Combining \eqref{(4.13)}, \eqref{(4.14)} and \eqref{(4.15)}, we have
    \begin{equation}\label{(4.16)}
        \frac{1}{N}\|w_{j}\|_{S}^{N}=c_{K}+\int_{\mathbb{R}^{N}}\frac{G(x,u_{0})}{F^{0}(x)^{\delta}}dx+\epsilon\int_{\mathbb{R}^{N}}hu_{0}dx+o_{j}(1).
    \end{equation}
    Similarly, we can obtain
    \begin{equation}\label{(4.17)}
        \frac{1}{N}\|y_{j}\|_{S}^{N}=c_{\epsilon}+\int_{\mathbb{R}^{N}}\frac{G(x,u_{0})}{F^{0}(x)^{\delta}}dx+\epsilon\int_{\mathbb{R}^{N}}hu_{0}dx+o_{j}(1).
    \end{equation}
   By \eqref{(4.16)} and \eqref{(4.17)}, we have
    \begin{equation}\label{(4.18)}
        \|w_{j}\|_{S}^{N}-\|u_{0}\|_{S}^{N}=N(c_{K}-c_{\epsilon}+o_{j}(1)).
    \end{equation}
    Using Lemma \ref{Le3.5}, there exist $\epsilon^{0}>0$ and $\gamma^{0}>0$ such that, if $0<\epsilon<\epsilon^{0}$, we may take $c_{K}$ such that
    \begin{equation}\label{(4.19)}
        0<c_{K}<\frac{1}{N}\left(\frac{N-\delta}{N}\frac{\lambda_{N}}{\lambda_{0}}\right)^{N-1}-\gamma^{0}.
    \end{equation}
     Since $l_{\epsilon}\to 0$ as $\epsilon\to 0$, recalling \eqref{(4.7)} and applying the H\"{o}lder inequality and Lemma \ref{Le2.5}, we obtain
    \begin{equation*}
        \sup\limits_{\|u\|_{S}\leq l_{\epsilon}}\int_{\mathbb{R}^{N}}\frac{G(x,u)}{F^{0}(x)^{\delta}}dx\to 0
    \end{equation*}
    and
    \begin{equation*}
        \sup\limits_{\|u\|_{S}\leq l_{\epsilon}}\int_{\mathbb{R}^{N}}hudx\to 0
    \end{equation*}
    as $\epsilon\to 0$.
    Thus $c_{\epsilon}\to 0$ as $\epsilon\to 0$.
    Let $\epsilon^{*}=\min\{\epsilon^{0}, \epsilon_{2}\}$. Then there exists $\epsilon_{3}$ with $0<\epsilon_{3}<\epsilon^{*}$ such that, for $0<\epsilon<\epsilon_{3}$,
    \begin{equation}\label{(4.20)}
       0<c_{K}-c_{\epsilon}<\frac{1}{N}\left(\frac{N-\delta}{N}\frac{\lambda_{N}}{\lambda_{0}}\right)^{N-1}.
     \end{equation}
     Set
     \begin{equation*}
        m_{j}=\frac{w_{j}}{\|w_{j}\|_{S}}, \quad m_{0}=\frac{u_{0}}{(\|u_{0}\|_{S}^{N}+N(c_{K}-c_{\epsilon}))^{\frac{1}{N}}}.
     \end{equation*}
     By \eqref{(4.18)} and the weak convergence $w_{j}\rightharpoonup u_{0}$ in $S$, we obtain that $m_{j}\rightharpoonup m_{0}$ weakly in $S$.
     Note that
    \begin{equation*}
       \int_{\mathbb{R}^{N}}\frac{\Phi(N,
        \lambda_{0}|w_{j}|^{\frac{N}{N-1}})}{F^{0}(x)^{\delta}}dx=\int_{\mathbb{R}^{N}}\frac{\Phi(N,\lambda_{0}\|w_{j}\|_{S}^{\frac{N}{N-1}}|m_{j}|^{\frac{N}{N-1}})}{F^{0}(x)^{\delta}}dx.
    \end{equation*}
    Combining \eqref{(4.18)} and \eqref{(4.20)}, we have
    \begin{equation*}
     \lim\limits_{j\to\infty}
     \lambda_{0}\|w_{j}\|_{S}^{\frac{N}{N-1}}
     (1-\|m_{0}\|_{S}^{N})^{\frac{1}{N-1}}<\left(1-\frac{\delta}{N}\right)\lambda_{N}.
    \end{equation*}
    By Lemmas \ref{Le2.3} and \ref{Le2.4}, $\frac{\Phi(N, \lambda_{0}|w_{j}|^{\frac{N}{N-1}})}{F^{0}(x)^{\delta}}$ is bounded in $L^{q}(\mathbb{R}^{N})$ for some $q>1$. By $(B_{1})$, we obtain
    \begin{equation*}
        |f(x, w_{j})|\leq a_{1}|w_{j}|^{N-1}+a_{2}\Phi\left(N,\lambda_{0}|w_{j}|^{\frac{N}{N-1}}\right).
    \end{equation*}
    Then, by the continuous embedding $S\hookrightarrow L^{p}(\mathbb{R}^{N})$ for every $p\geq 1$, we derive that $\frac{f(x, w_{j})}{F^{0}(x)^{\delta}}$ is bounded in $L^{q_{1}}(\mathbb{R}^{N})$ for some $q_{1}$ with $1<q_{1}<q$. This, together with Lemma \ref{Le2.5} and the H\"{o}lder inequality, implies
    \begin{equation}\label{(4.21)}
        \left|\int_{\mathbb{R}^{N}}\frac{f(x, w_{j})(w_{j}-u_{0})}{F^{0}(x)^{\delta}}dx\right|\leq \left\|\frac{f(x,w_{j})}{F^{0}(x)^{\delta}}\right\|_{L^{q_{1}}(\mathbb{R}^{N})} \|w_{j}-u_{0}\|_{L^{q_{2}}(\mathbb{R}^{N})}\to 0,
    \end{equation}
    where $\frac{1}{q_{1}}+\frac{1}{q_{2}}=1$.

    Taking $\psi=w_{j}-u_{0}$ in \eqref{(4.13)}, by \eqref{(4.15)} and \eqref{(4.21)}, we have
     \begin{equation}\label{(4.22)}
        \int_{\mathbb{R}^{N}}
       (F^{N-1}(\nabla w_{j})F_{\xi}(\nabla w_{j})\nabla (w_{j}-u_{0})+V(x)|w_{j}|^{N-2}w_{j}(w_{j}-u_{0}))dx=o_{j}(1).
     \end{equation}
    Then by the weak convergence $w_{j}\rightharpoonup u_{0}$ in $S$,  we obtain
     \begin{equation}\label{(4.23)}
        \int_{\mathbb{R}^{N}}
       (F^{N-1}(\nabla u_{0})F_{\xi}(\nabla u_{0})\nabla (w_{j}-u_{0})+V(x)|u_{0}|^{N-2}u_{0}(w_{j}-u_{0}))dx=o_{j}(1).
    \end{equation}
    Subtracting \eqref{(4.23)} from \eqref{(4.22)} and applying inequalities \eqref{(2.5)} and \eqref{(2.6)}, it follows that
    \begin{equation*}
        \|w_{j}-u_{0}\|_{S}^{N}\to 0 \quad \text{as}\ j\to\infty.
    \end{equation*}
    Combining with \eqref{(4.18)}, we have
    \begin{equation*}
        c_{K}=c_{\epsilon},
    \end{equation*}
    which contradicts $c_{K}>0$ and $c_{\epsilon}<0$, thus $u_{K}\not\equiv u_{0}$. Consequently, for every $0<\epsilon<\epsilon_{3}$, problem \eqref{(1.6)} admits two distinct positive weak solutions, namely $u_{K}$ and $u_{0}$. The proof of Theorem \ref{Th1.2} is completed. \qed

\section*{Acknowledgments}Supported by Natural Science Foundation of China (Grant Nos. 11721101, 11971026, 12171459) and Natural Science Foundation of Education Committee of Anhui Province (Grant No. 2024AH051344).

      \end{document}